\newcommand{\R}{\textnormal{I\kern-0.21emR}}
\newcommand{\N}{\textnormal{I\kern-0.21emN}}
\renewcommand{\geq}{\geqslant}
\renewcommand{\leq}{\leqslant}
\def\B{{\mathbb B}}
\def\e{{\varepsilon}}
\def\tmm{{\theta_{m,\mu}}}
\def\dtmm{{\dot\theta_{m,\mu}}}
\def\ddtmm{{\ddot\theta_{m,\mu}}}
\def\pmm{{p_{m,\mu}}}
\def\umm{{u_{m,\mu}}}
\def\YYint#1#2#3{{\setbox0=\hbox{$#1{#2#3}{\iint}$}
    \vcenter{\hbox{$#2#3$}}\kern-.51\wd0}}
\def\n{{\nabla}}
\def\p{{\varphi}}
 \newcommandx{\unsure}[2][1=]{\todo[linecolor=red,backgroundcolor=red!25,bordercolor=red,#1]{#2}}
 \newcommandx{\change}[2][1=]{\todo[linecolor=blue,backgroundcolor=blue!25,bordercolor=blue,#1]{#2}}
 \newcommandx{\info}[2][1=]{\todo[linecolor=green,backgroundcolor=green!25,bordercolor=green,#1]{#2}}
 \newcommandx{\improvement}[2][1=]{\todo[linecolor=yellow,backgroundcolor=yellow!25,bordercolor=yellow,#1]{#2}}
  \newcommandx{\biblio}[2][1=]{\todo[linecolor=blue,backgroundcolor=magenta!25,bordercolor=blue,#1]{#2}}
\newtheorem*{theorem*}{Theorem}
\newtheorem{theorem}{Theorem}
\newtheorem{material}{material}
\newtheorem{proposition}[material]{Proposition}
\newtheorem{lemma}[material]{Lemma}
\newtheorem{remark}[material]{Remark}
\def\O{{\Omega}}
 \numberwithin{equation}{section}
\begin{document}
\title{{Optimisation of the total population size for logistic diffusive equations: bang-bang property and fragmentation rate}}


\author{Idriss Mazari\footnote{Technische Universit\"{a}t Wien, Institute of Analysis and Scientific Computing, 8-10 Wiedner Haupstrasse, 1040 Wien (\texttt{idriss.mazari@tuwien.ac.at})} \and Gr\'egoire Nadin\footnote{ CNRS, Sorbonne Universit\'es, UPMC Univ Paris 06, UMR 7598, Laboratoire Jacques-Louis Lions, F-75005, Paris, France (\texttt{gregoire.nadin@sorbonne-universite.fr})}
	\and Yannick Privat\footnote{Universit\'e de Strasbourg, CNRS UMR 7501, INRIA, Institut de Recherche Math\'ematique Avanc\'ee (IRMA), 7 rue Ren\'e Descartes, 67084 Strasbourg, France ({\tt yannick.privat@unistra.fr}).}
}
\date{}

\maketitle

\begin{abstract}
In this article, we give an in-depth analysis of the problem of optimising the total population size for a standard logistic-diffusive model. This optimisation problem stems from the study of spatial ecology and amounts to the following question: assuming a species evolves in a domain, what is the best way to spread resources in order to ensure a maximal population size at equilibrium? {In recent years, many authors contributed to this topic.} We settle here the proof of two fundamental properties of optimisers: the bang-bang one 
which had so far only been proved under several strong assumptions, and the other one is the fragmentation of maximisers. Here, we prove the bang-bang property in all generality using a new spectral method. 
{The technique introduced to demonstrate the bang-bang character of optimizers can be adapted and generalized to many optimization problems with other classes of bilinear optimal control problems where the state equation is semilinear and elliptic. We comment on it in a conclusion section.}
Regarding the geometry of maximisers, we exhibit a blow-up rate for the $BV$-norm of maximisers as the diffusivity gets smaller: if $\O$ is an orthotope and if $m_\mu$ is an optimal control, then $\Vert m_\mu\Vert_{BV}\gtrsim \sqrt{\mu}$. The proof of this results relies on a very fine energy argument.  

\end{abstract}

\noindent\textbf{Keywords:} diffusive logistic equation, optimal control, bilinear optimal control, calculus of variations, shape optimization.

\medskip

\noindent\textbf{AMS classification:} 35Q92,49J99,34B15.
\paragraph{Acknowledgment.}I. Mazari and Y. Privat were partiallly supported by the French ANR Project ANR-18-CE40-0013 - SHAPO on Shape Optimization. I Mazari was partially supported by the Austrian Science Fund (FWF) projects I4052-N32 and F65.  I. Mazari, G. Nadin and Y. Privat were partially supported by the Project "Analysis and simulation of optimal shapes - application to lifesciences" of the Paris City Hall.


\section{Introduction}
{This article is devoted to the study of a problem of calculus of variations motivated by questions of spatial ecology. }
This problem is related to the ubiquitous question of \emph{optimal location of resources}. While we further specify what we mean by \textquotedblleft optimal\textquotedblright\,  in {what follows}, let us note that optimisation problems related to the location of resources are a possible way to tackle the question of \emph{spatial heterogeneity} in reaction-diffusion equations. In this context, spatial heterogeneity is interpreted as heterogeneity of the resources available to a given population.

 In this paper, {we thoroughly analyse the issue of optimising the total population size with respect to the resource distribution.}
{The reaction-diffusion model we deal with is made precise in Section~\ref{Se:Model} and the precise statement of our main results in Section~\ref{Se:Elliptic}.}
{In a nutshell,} our results may be recast as follows: 
\begin{itemize}
\item First, we give a characterisation of pointwise properties of optimal resource distributions (also called the \emph{bang-bang property}) that has been partially tackled in \cite{Mazari2020,NagaharaYanagida}; in these previous contributions, the contents of which we discuss in Sections~\ref{Se:Model} and \ref{Se:Bib}, {partial answers are provided under several technical assumptions}. {We present here a new method that we believe to be flexible and versatile enough to be applied to a wide class of \emph{bilinear optimal control problem}}, and that provides a positive answer to the question of knowing whether optimal resource distributions are bang-bang.
\item Second, we prove a \emph{fragmentation phenomenon}, with explicit blow-up rates: as has been noticed \cite{LouNagaharaYanagida,Mazari2020,MRBSIAP}, for the optimisation of the total population size, the characteristic dispersal rate of the population has a drastic influence on the geometry of optimal resource distributions (in the sense that, the lower the characteristic dispersal rate, the more spread out the optimal resource distribution). Here, we provide an \emph{explicit blow-up rates} for the $BV$-norm of optimal resource distribution, the $BV$-norm being a natural way to quantify the fragmentation or complexity of a resource distribution. {We refer to Section~\ref{Se:Model} for further explanations.}
\end{itemize}



\subsection{Model and statement of the problems}\label{Se:Model}
\paragraph{Statement of the problems}
Let us first lay down the model and the optimisation problems under consideration. The following paragraph is dedicated to explaining which kind of properties we want to obtain for these optimisation problems.


We introduce the model we consider throughout the paper. We place ourselves in the framework of the Fisher-KPP equation which, since the seminal \cite{Fisher,KPP}, has been used at length: while its apparent simplicity makes it amenable to mathematical analysis, it is complex enough to capture several fundamental aspects of population dynamics \cite{Skellam}. This model reads:
\begin{equation}\label{LDE}\tag{$\bold E_{m,\mu}$}
\left\{\begin{array}{ll}
\mu \Delta \theta+\theta(m-\theta)=0& \text{ in }\O, 
\\ \frac{\partial \theta}{\partial \nu}=0& \text{ on }\partial \O, 
\\ \theta\geq 0, \theta\neq 0,
\end{array}\right.
\end{equation}
where $\theta:\O\to \R_+$ is the population density,.
The population access to resources is modelled by a function $m\in L^\infty(\O)$ and $\mu>0$ is the dispersal rate. 

Although we consider here Neumann boundary conditions, Theorems \ref{Th:BangBang} and \ref{Th:Jcroi} below {can be extended to Robin boundary conditions as well}, the only difficulty being that one would need to ensure existence and uniqueness for the logistic-diffusive equations under these conditions. We comment on this in the conclusion (Section~\ref{Se:BC}).

Provided that $m\geq 0$ and $m\not\equiv 0$, {there exists a unique solution} to \eqref{LDE} \cite{BHR,CantrellCosner1,MR1105497}. We {denote} it $\theta_{m,\mu}$.

We can hence define the total-population size functional 
\begin{equation}\label{def:Fmu}
\forall \mu>0,\forall m\in \mathcal M(\O),\quad   F_\mu(m):=\int_\O \theta_{m,\mu}.
\end{equation}  
We use the following class of constraints on the admissible resource distributions $m$, which was introduced in \cite{Lou2008} and used, for instance, in \cite{Mazari2020,NagaharaYanagida}:
\begin{equation}\label{Eq:Ad}\mathcal M(\O):=\left\{m\in L^\infty(\O), 0\leq m\leq 1, \int_\O m=m_0\right\}.\end{equation} 
The parameter $m_0$ is a positive real number such that $m_0<|\O|$, {where $|\O|$ denotes the volume of $\O$}, in order to ensure that $\mathcal M(\O)\neq \emptyset$. 
{The $L^1$ constraint accounts for the fact that, in a given domain, only a limited amount of resources is available. The second constraint is a pointwise one, and accounts for natural limitations of the environment, i.e. the fact that, in a single spot, only a maximum amount of resources can be available. }


{The optimisation problem we consider} reads

\begin{equation}\tag{$P_\mu$}\label{Eq:Pv}
\boxed{
\sup_{m\in \mathcal M(\O)}F_\mu(m)
},
\end{equation}
{where $F_\mu(m)$ is given by \eqref{def:Fmu}.}

\begin{remark}[Existence of maximisers] For any $\mu>0$, the existence of a solution $m_\mu^*$ of \eqref{Eq:Pv} is an immediate consequence of the direct method in the calculus of variations.\end{remark}


In the following paragraph, we present the fundamental properties we are interested in.

\paragraph{Optimisation of spatial heterogeneity in mathematical biology: fundamental properties under consideration}\label{Se:Scope}

 Starting from spatially homogeneous models \cite{Fisher,KPP}, in which a population is assumed to live in a homogeneous environment, mathematical biology has over the past decades started considering the impact of \emph{spatial heterogeneity} on population dynamics \cite{MR1105497}. In most works, this spatial heterogeneity is modelled using resource distributions. Mathematically, this amounts to taking into account the heterogeneity in the reaction term of the equation. Given that it is hopeless, for a given resource distribution, to attain an explicit description of the ensuing population dynamics, the focus has, more recently, shifted to an optimisation point of view. 
 
{This approach has been initiated in} \cite{BHR,KaoLouYanagida,LouInfluence} and has since received a considerable amount of attention \cite{BaiHeLi,Ding2010,LamboleyLaurainNadinPrivat,LouNagaharaYanagida,MazariNadinPrivat,Mazari2020,NagaharaYanagida}. The initial question that motivated most of these works was related to the optimal survival ability of a population \cite{BHR,ShigesadaKawaski}. Namely:
\begin{center}
\emph{What is the best way to spread resources in a domain to ensure the optimal survival of a population?}\end{center} This problem is by now very well understood in several simple cases (we provide ampler references in Section~\ref{Se:Bib}). {Among all the issues tackled by the authors of} \cite{BHR,KaoLouYanagida,LamboleyLaurainNadinPrivat}, let us single out the following ones, which have been deemed crucial in the study of spatial heterogeneity as they provide simple, qualitative information about the influence of heterogeneity: in a domain $\O$, if we consider resource distribution $m$ belonging to $\mathcal M(\O)$ defined by \eqref{Eq:Ad},
\begin{enumerate}
\item {\textsf{does the bang-bang property hold at the optimum?} In other words, if one looks at maximising  a criterion over resource terms in $\mathcal M(\O)$, 
does any optimal resource distribution $m^*$ write $m^*=\mathds 1_E$, for some measurable subset $E$ of $\O$ of positive measure?} Alternatively, this means that the underlying domain $\O$ can be decomposed as
\begin{equation}\O=\{m^*=1\}\sqcup \{m=0\}.\end{equation} 


Despite several partial results \cite{Mazari2020,NagaharaYanagida} which we detail in Remarks \ref{Re:CompaNY} and \ref{Re:CompaMNP} , this property is not known to hold in general for the optimisation of the total population size. In this article we prove that this property indeed holds for the optimal population size whatever the value of $\mu>0$ be (Theorem \ref{Th:BangBang}). 

\item {\textsf{do optimal resources tend to concentrate?}} In \textquotedblleft simple \textquotedblright cases (i.e. in simple geometries and for specific boundary conditions), optimal resource distributions for the survival ability \cite{BHR,KaoLouYanagida} are known to be \emph{concentrated}. {For instance, considering an optimal resource distribution for the survival ability, {which is known to write $m^* =\mathds 1_E$}, then the set $E$ is connected and enjoys moreover a symmetry property 
 for Neumann boundary conditions in an orthotope \cite[Proposition 2.9]{BHR}. A similar conclusion holds whenever $\O=\mathbb B(0;r)$ is a ball and if Dirichlet boundary conditions are imposed rather than Neumann. In that case, $E=\mathbb B(0;r^*)$ is another centered ball, with a radius $r^*$ chosen so as to satisfy the volume constraint.}  For general geometries and Robin boundary conditions, the situation is very involved and we refer to \cite{LamboleyLaurainNadinPrivat} for up to date qualitative properties. Such results are a mathematical formalisation of a paradigm first stated in \cite{ShigesadaKawaski}: fragmenting the set $\{m=1\}$ leaves less chance for survival: \emph{concentrating resources is favorable to population dynamics}. 

%
%

In the case of the total population size, it was first noticed in \cite{Mazari2020} that such results do not in general hold for small diffusivities, where the geometry of the optimal resource distribution tends to become more complicated. Recently, in \cite{LouNagaharaYanagida}, a complete treatment of a spatially discretised version of the problem was carried out, and precise fragmentation rules were established. However, these results cannot be extended to the present continuous version, since the optimiser they compute strongly depends on the discretization scale.  In \cite{MRBSIAP}, it was shown that, the slower the dispersal rate of the population, the bigger the $BV$-norm\footnote{Recall that the total variation semi-norm of a function is 
\begin{equation}\left| m\right|_{TV(\O)}=\sup\left\{\int_\O m \operatorname{div}(\p), \p \in \mathscr C^1_c(\O;\R^d), \Vert \p \Vert_{L^\infty}\leq 1\right\}\end{equation} and that the bounded variation norm of $m$ is in turn defined as
\begin{equation}
\Vert m\Vert_{BV(\O)} = \Vert m\Vert_{L^1(\O)}+\left| m\right|_{TV(\O)}.
\end{equation}
} of the optimal resource distribution {is}.

\begin{remark}\label{Re:BV} When $m\in W^{1,1}(\O)$, the $BV$-norm and the $W^{1,1}$ norm coincide. When $m=\mathds 1_E$ and $m$ is a Cacciopoli set (i.e. a set with finite Cacciopoli perimeter) then $\Vert m\Vert_{BV(\O)}=|E|+\operatorname{Per}(E)$, where $\operatorname{Per}(E)$ is the Cacciopoli perimeter of the set. As a consequence, in our context, an information on the blow-up rate of the $BV$-norm {yields} an information on the blow-up rate of the $TV$-norm and, since Theorem \ref{Th:BangBang} ensures that any maximiser $m_\mu^*$ writes as $\mathds 1_{E_\mu^*}$, this implies a blow-up rate on $\operatorname{Per}(E_\mu^*)$ as $\mu \to0^+$. We refer to \cite{AmbrosioFuscoPallara} for more information regarding functions of bounded variations and perimeters of sets.
\end{remark}

{In \cite{MRBSIAP}, the main result reads: 
\begin{paragraph}{Theorem \textnormal{\cite[\textit{Theorem 1}]{MRBSIAP}}.}
\emph{Let $\O=(0;1)^d$, $\mu>0$, and let  $m_\mu^*$ denote a solution of Problem~\eqref{Eq:Pv}. Then, 
$$\Vert m_\mu^*\Vert_{BV(\O)}\xrightarrow[\mu \to0^+]{} +\infty.$$}
\end{paragraph}
 In this article, we quantify this result by explicitly identifying blow-up rates in terms of the characteristic dispersal rate, and provide a scaling we expect to be optimal (Theorem~\ref{Th:FragND}). 
The proof relies on fine energy estimates. }
\end{enumerate}

A more in-depth discussion of the bibliography is included in Section \ref{Se:Bib}.

\subsection{Main results}\label{Se:Elliptic}
\subsubsection{The bang-bang property}
{Let us first state that every solution of the optimal population size problem is {\it bang-bang}. This property, intrinsically interesting, has a practical interest: it allows us to reformulate the problem as a shape optimization one, the unknown being the set in which $m$ takes its maximum value. One can then use adapted numerical approaches. }

\begin{theorem}\label{Th:BangBang} Let $\O\subset \R^d$ be a bounded {connected} domain {with a $\mathscr{C}^2$ boundary}. Let $m_\mu^*$ be a solution of \eqref{Eq:Pv}. Then 
there exists a measurable subset $E\subset \O$ such that 
\begin{equation}m_\mu^*=\mathds 1_E.\end{equation}\end{theorem}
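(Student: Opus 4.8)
The plan is to argue by contradiction: suppose $m_\mu^*$ is an optimiser that is not bang-bang, so that the set $\omega := \{0 < m_\mu^* < 1\}$ has positive Lebesgue measure. On $\omega$ we have room to perturb $m_\mu^*$ in both directions while staying in $\mathcal M(\O)$. The natural first step is to write down the first- and second-order optimality conditions. For an admissible perturbation $h \in L^\infty(\O)$ with $\int_\O h = 0$ and $m_\mu^* + th \in \mathcal M(\O)$ for small $t$, one differentiates $F_\mu$. Let $\theta^* := \theta_{m_\mu^*,\mu}$ and introduce the adjoint state $p$ solving the linearised adjoint equation $\mu \Delta p + (m_\mu^* - 2\theta^*) p = -1$ with Neumann conditions (well-posedness of this adjoint problem, i.e. that $0$ is not in the relevant spectrum, is the first thing to check, using that $\mu\Delta + (m-2\theta)$ has negative principal eigenvalue at a solution of \eqref{LDE}). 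Then $F_\mu'(m_\mu^*)\cdot h = \int_\O p\, \theta^*\, h$, so the switching function is $\psi := p\,\theta^*$, and the bang-bang alternative is exactly the statement that $\psi$ is (a.e.) nonconstant on $\omega$ — were $\psi$ non-constant we could increase $F_\mu$ by a rearrangement on $\omega$, contradicting optimality. Hence the crux is to show $\psi \equiv \mathrm{const}$ on $\omega$ leads to a contradiction.

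The key idea — the "new spectral method" advertised in the abstract — is to exploit the equation satisfied by $\psi = p\theta^*$ on the set $\omega$. On $\omega$, where $m_\mu^*$ is free, the first-order condition forces $\psi$ to be constant, and I would then compute $\mu\Delta(p\theta^*)$ using the PDEs for $p$ and $\theta^*$: expanding $\Delta(p\theta^*) = p\Delta\theta^* + 2\nabla p\cdot\nabla\theta^* + \theta^*\Delta p$ and substituting $\mu\Delta\theta^* = -\theta^*(m-\theta^*)$ and $\mu\Delta p = -1 - (m-2\theta^*)p$, the $m$-dependent terms should combine so that on $\omega$ one obtains a relation of the form $\mu\Delta\psi + (\text{something})\psi = \text{lower order}$, where the coefficient "something" does not involve $m$ — this is where the constancy of $\psi$ on $\omega$ becomes rigid. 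Differentiating the relation $\psi=c$ on $\omega$ gives $\nabla\psi = 0$ and $\Delta\psi=0$ a.e. on $\omega$ (using that $\psi\in W^{2,p}_{loc}$ by elliptic regularity, and that $\nabla\psi=0$ a.e. on a level set), which plugged into the computed identity yields a pointwise algebraic constraint linking $\theta^*$, $\nabla\theta^*$, $\nabla p$ and $p$ on $\omega$. I expect this to force $\theta^*$ to satisfy an overdetermined condition — e.g. $|\nabla\theta^*|^2$ expressible through $\theta^*$ alone, or $\theta^*$ forced to be constant on $\omega$ — from which, via the equation $\mu\Delta\theta^* = -\theta^*(m-\theta^*)$, one would read off that $m$ itself is determined on $\omega$ (contradicting $0<m<1$ there), or that $\omega$ must have measure zero by a unique-continuation argument.

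The main obstacle, and where I would spend the most care, is making this rigidity argument watertight at the level of regularity and measure theory: $m_\mu^*$ is merely $L^\infty$, so $\theta^*, p \in W^{2,p}(\O)$ for all $p<\infty$ but not better, and one is working on a set $\omega$ that need not be open. The statement "$\nabla\psi = 0$ and $\Delta\psi=0$ a.e. on $\omega$" requires the classical fact that a Sobolev function has vanishing gradient a.e. on any level set, and one must be careful that second derivatives also vanish a.e. on $\{\psi = c\}\cap\omega$. Then the derived pointwise identities hold only a.e., and concluding that $\omega$ is null needs either a genuine unique continuation principle adapted to this setting, or an additional bootstrap showing $m_\mu^*$ is more regular on $\omega$ (hence $\theta^*, p$ are, hence $\omega$ can be treated as relatively open after removing a null set). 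A secondary technical point is justifying that second-order variations are available, i.e. that admissible bidirectional perturbations on $\omega$ exist and that $F_\mu$ is twice differentiable in the required sense — this is routine given the constraint structure but must be stated. If the single-identity approach does not close, the fallback is to iterate: differentiate the constraint $\psi \equiv c$ repeatedly against the PDE to generate a whole family of pointwise relations on $\omega$, eventually saturating the finite-dimensional pointwise data $(\theta^*,\nabla\theta^*, p, \nabla p)$ and forcing a contradiction.
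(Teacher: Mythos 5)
Your setup (contradiction on $\omega=\{0<m_\mu^*<1\}$, the adjoint state, the switching function $\psi=p\,\theta^*$, and the first-order condition $\psi\equiv c$ on $\omega$) matches the paper. But the mechanism you propose for extracting a contradiction from $\psi\equiv c$ does not close, for two reasons. First, the hoped-for cancellation of the $m$-dependence in $\Delta\psi$ does not occur: substituting the two PDEs gives $\mu\Delta(p\theta^*)=-\theta^*-p\theta^*(2m-3\theta^*)+2\mu\nabla p\cdot\nabla\theta^*$, so imposing $\Delta\psi=0$ and $\nabla\psi=0$ a.e.\ on $\omega$ merely yields the single identity $2cm=(3c-1)\theta^*-2\mu c\,|\nabla\theta^*|^2/(\theta^*)^2$ a.e.\ on $\omega$ (with $c=p\theta^*>0$). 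This expresses $m$ through $(\theta^*,\nabla\theta^*)$ but is in no way incompatible with $0<m<1$ on a set of positive measure: it is one scalar constraint on quantities that themselves depend on $m$, not an overdetermined system. Second, the escape routes you invoke are not available: there is no unique continuation principle allowing you to conclude $|\omega|=0$ from an algebraic identity holding a.e.\ on a measurable set with possibly empty interior, and ``iterating derivatives along the PDE'' cannot be performed on such a set --- the only legitimate differentiation there is the a.e.\ vanishing of $\nabla\psi$ and $D^2\psi$ on the level set, which you have already spent. This first-order rigidity route is essentially what earlier attempts tried, and it is precisely why the bang-bang property remained open before this paper.

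The paper's actual argument is second order, which you mention only as a ``secondary technical point'' and never use. One shows that $\ddot F_\mu(m)[h,h]=\mu\int_\O u\,|\nabla \dtmm|^2-\int_\O V\,\dtmm^2$ with $\inf_\O u>0$ and $V\in L^\infty(\O)$, where $\dtmm$ solves $\mathcal L_m\dtmm=h\theta^*$ for $\mathcal L_m=-\mu\Delta-(m-2\theta^*)$, whose spectrum is positive and discrete. Choosing $h$ supported in $\omega$ with $\int_\O h=0$ and with $h\theta^*$ orthogonal in $L^2$ to the first $K$ eigenfunctions of $\mathcal L_m$ (finitely many linear constraints on $L^2(\omega)$, hence solvable), the solution $\dtmm$ carries only high modes and $\mu\int_\O|\nabla\dtmm|^2\geq(\lambda_{K+1}-M)\int_\O\dtmm^2$; for $K$ large this makes $\ddot F_\mu(m)[h,h]>0$ while $\dot F_\mu(m)[h]=c\int_\O h=0$, contradicting maximality. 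This spectral construction of a destabilising perturbation is the missing idea in your proposal.
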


\begin{remark}[{Sketch of the proof}]
The idea of the proof {rests upon the following fact}: 
{
we can actually show that the second order G\^ateaux derivative of the criterion $F_\mu$ at a point $m\in \mathcal M(\O)$ in a direction $h$ (such that $m+th\in \mathcal M(\O)$ for $t$ small enough) writes
\begin{equation}
\ddot F_\mu(m)[h,h]=\int_\O \Psi_m(x)|\n \dtmm|^2-\int_\O \Phi_m(x)\dtmm^2,
\end{equation} 
where  $\Psi_m, \Phi_m\in L^\infty(\O)$, $\inf_\O \Psi_m>0$ and $\dtmm$ solves a PDE of the kind
$$
\left\{\begin{array}{ll}
\mathcal L_m\dtmm
=h\tmm & \text{in }\O\\
\partial_\nu \dtmm=0 & \text{in }\partial\O,
\end{array}\right.$$
where $\mathcal L_m$ denotes an elliptic operator of second order.
 We then argue by contradiction, assuming the existence of a maximiser $m_\mu^*$ that is not a bang-bang function, meaning that the set $\{0<m_\mu^*<1\}$ is of positive Lebesgue measure. Using the expression of $\ddot F_\mu(m)[h,h]$ above, we exhibit a function $h$ in $L^\infty$ supported in $\{0<m_\mu^*<1\}$, with $\int_\O h=0$, such that $\int_\O |\n \dot\theta_{m_\mu^*,\mu}|^2$ is much larger than $\int_\O \dot\theta_{m_\mu^*,\mu}^2$. 
 This is done by using the Fourier (spectral) expansion of $\tmm$, associated with the operator $\mathcal L_{m_\mu^*}$, and by choosing $h$ as above, and such that, $h\theta_{m_\mu^*,\mu}$ only has high Fourier modes in this basis.}
\end{remark}
\begin{remark}[Comparison with the results of \cite{NagaharaYanagida}] \label{Re:CompaNY}In \cite{NagaharaYanagida}, the following result is {proved}: if $m\in \mathcal M(\O)$ is such that $\{0<m<1\}$ has a non-empty interior, then it is not a solution of \eqref{Eq:Pv}. This in particular implies that, if a maximiser $m_\mu^*$ of the total population size functional is Riemann integrable, then $m_\mu^*$ {is continuous almost everywhere in $\O$} and is thus necessarily of bang-bang type. However, such regularity is usually extremely hard to prove, and it is unclear to us whether it is attainable in this context. Furthermore, we believe we have located a slight mistake in their proof and we hence provide a correction {in Section~\ref{Se:Comparison}}, where we also comment on the comparison between our two proofs.
\end{remark}
\begin{remark}
\label{Re:CompaMNP} In \cite{Mazari2020}, the bang-bang property is proved to hold whenever the diffusivity $\mu$ is large enough{, using a proof that is also based on a second derivative calculation, but whose philosophy is completely different from that of Theorem~\ref{Th:BangBang}}. Our present result does not require such an assumption.  
\end{remark}

\begin{remark}
A minor adaptation of our proof allows us to handle more general admissible sets and criteria: 
\begin{itemize}
\item let us consider a function $j$ satisfying 
\begin{equation}\label{Hyp:J}\tag{$\bold H_j$}
j\in \mathscr C^{2}([0;1];\R), j\text{ is increasing in $[0;1]$: } j'>0.\end{equation}
We define, for any $\mu>0$, 
\begin{equation}\mathcal J_{j,\mu}:\mathcal M(\O)\ni m\mapsto \int_\O j(\tmm)\end{equation} and the optimisation problem 
\begin{equation}\label{Eq:PvJ}\tag{$P_{j,\mu}$}\fbox{$\displaystyle \sup_{m\in \mathcal M(\O)}\mathcal J_{j,\mu}(m).$}\end{equation} Then proving a bang-bang property for this problem is amenable to analysis using our technique.
\item  If one were to change the $L^\infty$ bounds on $m$ to $0\leq m\leq \kappa$ for some positive $\kappa$, the only modification would be to replace $[0;1]$ with the interval $[0;\kappa]$ in assumption \eqref{Hyp:J} above. 
\end{itemize}
{We claim that our method of proof fits immediately to show the following result: }
\begin{theorem}\label{Th:Jcroi}
 Let $\O\subset \R^d$ be a $\mathscr C^2$ bounded domain and let $j$ satisfying \eqref{Hyp:J}. Let $m_{\mu,j}^*$ be a solution of \eqref{Eq:PvJ}. Then $m_{\mu,j}^*$ is a bang-bang function: there exists a measurable subset $E\subset \O$ such that 
\begin{equation}m_{\mu,j}^*=\mathds 1_E.
\end{equation}
\end{theorem}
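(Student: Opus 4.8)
The plan is to run the scheme sketched for Theorem~\ref{Th:BangBang} verbatim, tracking the dependence on $j$, and to check that the only structural ingredient used is the positivity $j'>0$ from \eqref{Hyp:J}. Fix $\mu>0$ and write $\theta_m:=\theta_{m,\mu}$. By the implicit function theorem the map $m\mapsto\theta_m$ is of class $\mathscr C^{2}$ from a neighbourhood of $m$ in $L^\infty(\O)$ into $W^{2,q}(\O)$ for every finite $q$ (hence into $\mathscr C^{1,\alpha}(\overline\O)$), because the linearised operator $\mathcal L_m:=-\mu\Delta-(m-2\theta_m)$ is invertible: $\mathcal L_m\theta_m=\theta_m^2\gneq0$, so $\theta_m$ is a positive supersolution of $\mathcal L_m\phi=0$, whence $\lambda_1(\mathcal L_m)>0$, $\mathcal L_m^{-1}$ is positivity preserving and, by the strong maximum principle, turns a nonnegative nonzero right-hand side into a function $>0$ on $\overline\O$. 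For an admissible perturbation $h$, the first and second linearised states $\dot\theta:=\dot\theta_m[h]$ and $\ddot\theta:=\ddot\theta_m[h,h]$ solve, with homogeneous Neumann conditions, $\mathcal L_m\dot\theta=h\theta_m$ and $\mathcal L_m\ddot\theta=2\dot\theta(h-\dot\theta)$. I then introduce the adjoint state $p=p_{m,\mu}$, solution of $\mathcal L_m p=j'(\theta_m)$ with $\partial_\nu p=0$; since $j'>0$ this gives $p>0$ on $\overline\O$. Elliptic regularity yields $p,\theta_m\in W^{2,q}\cap\mathscr C^{1,\alpha}(\overline\O)$, bounded above and below by positive constants, and moreover $\Delta p$ and $\Delta\theta_m$ lie in $L^\infty(\O)$ because the two PDEs express them \emph{algebraically} in terms of $L^\infty$ functions (even though $m$ is merely $L^\infty$, so $p,\theta_m$ are not $\mathscr C^2$).

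Next I would compute the second G\^ateaux derivative. As $j\in\mathscr C^2$,
\[\ddot{\mathcal J}_{j,\mu}(m)[h,h]=\int_\O j''(\theta_m)\dot\theta^2+\int_\O j'(\theta_m)\ddot\theta .\]
Replacing $j'(\theta_m)$ by $\mathcal L_m p$, using $\mathcal L_m\ddot\theta=2\dot\theta(h-\dot\theta)$, then eliminating $h$ through $h\theta_m=\mathcal L_m\dot\theta$ and integrating by parts (all boundary terms vanish since $\partial_\nu\dot\theta=0$ and $\partial_\nu(p/\theta_m)=0$), I expect to reach exactly the normal form of the sketch,
\[\ddot{\mathcal J}_{j,\mu}(m)[h,h]=\int_\O\Psi_m\,|\n\dot\theta|^2-\int_\O\Phi_m\,\dot\theta^2,\qquad \Psi_m=\frac{2\mu\,p}{\theta_m},\]
with $\Phi_m=-j''(\theta_m)+\mu\Delta(p/\theta_m)+2(p/\theta_m)(m-2\theta_m)+2p\in L^\infty(\O)$ — here the observation above that $\Delta p,\Delta\theta_m\in L^\infty$ is precisely what makes $\Delta(p/\theta_m)\in L^\infty$. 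Crucially $\inf_\O\Psi_m=2\mu\inf_\O(p/\theta_m)>0$ because $p>0$: this is the one spot where $j'>0$ is genuinely used. Consequently there are constants $c_1>0$, $c_2\ge0$, depending only on $m,\mu,j$, such that $\ddot{\mathcal J}_{j,\mu}(m)[h,h]\ge c_1\Vert\n\dot\theta\Vert_{L^2(\O)}^2-c_2\Vert\dot\theta\Vert_{L^2(\O)}^2$.

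To conclude, argue by contradiction: assume $m^*:=m_{\mu,j}^*$ solves \eqref{Eq:PvJ} but $\omega:=\{0<m^*<1\}$ has positive Lebesgue measure, and fix $\delta\in(0,1/2)$ so that $\omega_\delta:=\{\delta\le m^*\le1-\delta\}$ still has positive measure. Let $(\phi_k)_{k\ge0}$ be an $L^2(\O)$-orthonormal basis of Neumann--Laplacian eigenfunctions, with eigenvalues $0=\lambda_0\le\lambda_1\le\dots\to+\infty$, and fix an integer $N$ with $\lambda_N>c_2/c_1$. Since $L^\infty(\omega_\delta)$ is infinite-dimensional, there is $h\in L^\infty(\O)$, $h\not\equiv0$, $\operatorname{supp}h\subset\omega_\delta$, $\Vert h\Vert_{L^\infty}\le1$, satisfying the $N+1$ linear conditions $\int_\O h=0$ and $\int_\O h\,\theta_{m^*}\mathcal L_{m^*}^{-1}\phi_k=0$ for $k=0,\dots,N-1$; by self-adjointness of $\mathcal L_{m^*}^{-1}$ the latter read $\langle\dot\theta,\phi_k\rangle_{L^2}=0$ for $k<N$, i.e. $\dot\theta=\dot\theta_{m^*}[h]$ carries only Fourier modes of index $\ge N$ in this basis. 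Then $m^*+th\in\mathcal M(\O)$ for $|t|\le\delta$; since $h\not\equiv0$ and $\theta_{m^*}>0$, $\dot\theta=\mathcal L_{m^*}^{-1}(h\theta_{m^*})\neq0$; and $\dot\theta=\sum_{k\ge N}\langle\dot\theta,\phi_k\rangle\phi_k$ forces $\Vert\n\dot\theta\Vert_{L^2}^2=\sum_{k\ge N}\lambda_k\langle\dot\theta,\phi_k\rangle^2\ge\lambda_N\Vert\dot\theta\Vert_{L^2}^2$, whence $\ddot{\mathcal J}_{j,\mu}(m^*)[h,h]\ge(c_1\lambda_N-c_2)\Vert\dot\theta\Vert_{L^2}^2>0$. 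Because $t\mapsto\mathcal J_{j,\mu}(m^*+th)$ is $\mathscr C^2$ near $0$, either its first derivative at $0$ is nonzero and $\mathcal J_{j,\mu}$ strictly increases along one of the admissible directions $\pm h$, or that derivative vanishes and the strictly positive second derivative makes $t=0$ a strict local minimum along the line; in either case $\mathcal J_{j,\mu}(m^*+th)>\mathcal J_{j,\mu}(m^*)$ for some small admissible $t$, contradicting optimality. (Equivalently, one may first invoke the first-order condition, which makes the switching function $p_{m^*}\theta_{m^*}$ constant on $\omega$ and hence $\dot{\mathcal J}_{j,\mu}(m^*)[h]=0$ automatic.) Therefore $\omega$ is null and $m^*=\mathds 1_{E}$ with $E=\{m^*=1\}$.

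I expect the technical heart to be the second step: casting $\ddot{\mathcal J}_{j,\mu}$ into the normal form $\int_\O\Psi_m|\n\dot\theta|^2-\int_\O\Phi_m\dot\theta^2$ while keeping track of all boundary terms, and — above all — verifying $\Phi_m\in L^\infty(\O)$ despite $\theta_m,p$ not being $\mathscr C^2$, which rests entirely on the remark that the Laplacians occurring are controlled pointwise by the equations. The positivity $\inf_\O\Psi_m>0$, where \eqref{Hyp:J} intervenes, is then immediate from $p>0$. By contrast, the selection of $h$ is only finite-dimensional linear algebra inside the infinite-dimensional space $L^\infty(\omega_\delta)$, and the spectral truncation estimate is elementary; this is exactly the portion that carries over unchanged from the proof of Theorem~\ref{Th:BangBang}, and it is also the only place needing a cosmetic modification — replacing $[0;1]$ by $[0;\kappa]$ — under the variant pointwise bound $0\le m\le\kappa$ mentioned earlier.
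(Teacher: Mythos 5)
Your proposal is correct and follows essentially the same route as the paper: the adjoint state with source $j'(\theta_m)$, its positivity via $j'>0$, the reduction of $\ddot{\mathcal J}_{j,\mu}$ to the normal form $\int_\O\Psi_m|\nabla\dot\theta|^2-\int_\O\Phi_m\dot\theta^2$ with $\inf_\O\Psi_m>0$ and $\Phi_m\in L^\infty$ (your $\Phi_m$ matches the paper's $V_{j,m,\mu}$), and a high-frequency perturbation supported in $\{0<m^*<1\}$ killing finitely many low modes. The only (harmless) variations are that you impose the vanishing of the first $N$ modes of $\dot\theta$ in the Neumann--Laplacian eigenbasis and use $\Vert\nabla\dot\theta\Vert_{L^2}^2\geq\lambda_N\Vert\dot\theta\Vert_{L^2}^2$ directly, whereas the paper works in the eigenbasis of $\mathcal L_{m^*}$ and forces $h\theta_{m^*}$ to have only high $\mathcal L_{m^*}$-modes, and that your two-sided perturbation in $\omega_\delta$ lets you bypass the explicit first-order optimality condition.
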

{A short paragraph explaining how to adapt the proof of Theorem~\ref{Th:BangBang} is provided in Section~\ref{secproofTh:Jcroi}.} 
\end{remark}

\subsubsection{Quantifying the fragmentation for small diffusivities}
Our second main result deals with the aforementioned fragmentation property for low diffusivities. 
Here, we will be led to make stronger assumptions on $\O$, namely, that 
$\O$ is an orthotope: $\O=(0;1)^d$. {Hence, according to \cite[Lemma 2]{MRBSIAP}, one has} 
\begin{equation}\label{Eq:Arvo}\underset{\mu\to 0^+}{\lim\inf}\left(\sup_{m\in \mathcal M(\O)}F_\mu(m)\right)>m_0=\inf_{\mu>0, m\in \mathcal M(\O)}F_\mu(m).\end{equation} 
The equality on the right-hand side is obtained in \cite[Theorem 1.2]{LouInfluence}.\begin{remark} {If we consider  another domain $\tilde \O$ such that \eqref{Eq:Arvo} holds, then the main result below, Theorem \ref{Th:FragND}, holds in $\tilde \O$.}\end{remark}

We provide hereafter an explicit blow-up rate that we believe to be optimal. {Once again, let us emphasize that it this rate does not depend on the space dimension $d$. }

\begin{theorem}\label{Th:FragND}
Let $d\geq 1$ and let $\O=(0;1)^d$. There exists $C_0>0$ such that  the following holds: there exists $\mu_0>0$ such that, for any $\mu\in (0;\mu_0)$, if $m_\mu^*$ is a solution of \eqref{Eq:Pv}, then
\begin{equation}
\Vert m_\mu^*\Vert_{BV(\O)}\geq \frac{C_0}{\sqrt{\mu}}.
\end{equation}
\end{theorem}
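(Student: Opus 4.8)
The plan is to establish a lower bound on $\Vert m_\mu^*\Vert_{BV}$ by exploiting the fact, recorded in \eqref{Eq:Arvo}, that at the optimum the total population size stays bounded away from $m_0$ uniformly as $\mu\to0^+$. The starting point is the well-known energy identity for the logistic equation: testing \eqref{LDE} with $1$ gives $\int_\O m\theta_{m,\mu}=\int_\O\theta_{m,\mu}^2$, and testing with $\theta_{m,\mu}$ (or, more precisely, manipulating $\mu\int|\n\theta|^2=\int\theta^2(m-\theta)$) produces a relation between the Dirichlet energy $\mu\int_\O|\n\theta_{m,\mu}^*|^2$ and integrals of $\theta_{m,\mu}^*$. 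Writing $F_\mu(m_\mu^*)=\int_\O\theta^*=m_0+\delta_\mu$ with $\liminf_{\mu\to0}\delta_\mu>0$, and using $\int m^*\theta^*=\int(\theta^*)^2$ together with $0\le m^*\le1$, one extracts that $\int_\O\theta^*(1-\theta^*)$ is bounded below by a positive constant; since the Fisher--KPP comparison principle forces $0\le\theta^*\le\Vert m^*\Vert_{L^\infty}\le1$, this means $\theta^*$ must spend a nontrivial amount of mass strictly between $0$ and $1$, which is an obstruction to $\theta^*$ being close to the bang-bang profile $\mathds 1_{E_\mu^*}$.

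Next I would quantify the discrepancy between $\theta_{m,\mu}^*$ and $m_\mu^*$. Since $m_\mu^*=\mathds 1_{E_\mu^*}$ by Theorem~\ref{Th:BangBang}, subtracting gives an elliptic equation for $w_\mu:=\theta_{m_\mu^*,\mu}^*-m_\mu^*$, namely $\mu\Delta w_\mu=\theta^*(\theta^*-m^*)-\mu\Delta m^*$ in a distributional sense; testing against $w_\mu$ yields $\mu\int|\n\theta^*|^2 = -\int\theta^*(\theta^*-m^*)w_\mu + \mu\int\n m^*\cdot\n\theta^*$. The heart of the matter is then a bootstrap: the left side and the first term on the right can be controlled by the energy estimates from the previous paragraph (showing $\mu\int|\n\theta^*|^2$ is bounded and that $\int|\theta^*-m^*|^2$ is bounded below by a positive constant for small $\mu$), so that $\mu\int_\O\n m_\mu^*\cdot\n\theta_{m_\mu^*,\mu}^*$ is bounded below by a positive constant $c>0$. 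By Cauchy--Schwarz this forces
\begin{equation}
c\le \mu\int_\O \n m_\mu^*\cdot\n\theta_{m_\mu^*,\mu}^* \le \mu\,|m_\mu^*|_{TV}^{1/2}\Big(\mu\int_\O|\n\theta_{m_\mu^*,\mu}^*|^2\Big)^{1/2}\mu^{-1/2},
\end{equation}
and since $\mu\int|\n\theta^*|^2$ is bounded above by a constant, rearranging gives $|m_\mu^*|_{TV}\gtrsim 1/\mu$; one would then need to be a bit careful, as the $BV$ norm of an indicator controls its $TV$ seminorm trivially, but the interplay between $\mu^{1/2}$ factors has to be tracked to land on the $\sqrt\mu$ rate rather than $1/\mu$ — this suggests the estimate should instead be organised so that only $\mu^{1/2}|m_\mu^*|_{TV}$ enters against the bounded quantity $(\mu\int|\n\theta^*|^2)^{1/2}$, yielding the claimed $\Vert m_\mu^*\Vert_{BV}\ge C_0/\sqrt\mu$.

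The main obstacle, and the place where the ``very fine energy argument'' alluded to in the abstract is needed, is making rigorous sense of the pairing $\int_\O\n m_\mu^*\cdot\n\theta_{m_\mu^*,\mu}^*$: the optimiser $m_\mu^*$ is merely $BV$, not $W^{1,1}$, so $\n m_\mu^*$ is only a measure and this integral must be interpreted as $-\int_\O m_\mu^*\Delta\theta_{m_\mu^*,\mu}^*$ via integration by parts against the measure, or handled through a careful approximation/mollification of $m_\mu^*$ with uniform control on the regularisation error. A second delicate point is establishing the uniform lower bound $\int_\O|\theta_{m_\mu^*,\mu}^*-m_\mu^*|^2\ge c_0>0$ for small $\mu$: this cannot be gotten from \eqref{Eq:Arvo} alone and likely requires combining it with an $L^2$ or $L^1$ estimate showing that if $\theta^*$ were $L^1$-close to a characteristic function then $F_\mu(m_\mu^*)$ would have to be close to $m_0$ (e.g. because the reaction $\theta^*(m^*-\theta^*)$ would be small in an averaged sense, forcing $\theta^*\approx m^*$ and hence $\int\theta^*\approx\int m^*=m_0$), contradicting \eqref{Eq:Arvo}. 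Controlling the competition between these effects with the right powers of $\mu$ — ensuring no logarithmic losses and that the final exponent is exactly $1/2$ — is where the argument must be genuinely sharp, and is presumably why the authors only claim the rate is one they ``expect to be optimal.''
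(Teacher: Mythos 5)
Your approach is genuinely different from the paper's. The paper never pairs $\n m_\mu^*$ with $\n \tmm$ at all: it works entirely with the variational characterisation of $\tmm$ as the minimiser of $\mathcal E_{m,\mu}$ (Lemma~\ref{Le:VF}), proves that the shifted energy $\tilde{\mathcal E}_{m,\mu}(\tmm)$ controls $\Vert \tmm-m\Vert_{L^1(\O)}^3$ (Lemma~\ref{Le:L1}), bounds that energy from above by inserting a mollification $m_\e=m\star\chi_\e$ as a test function (so that only $\Vert \n m_\e\Vert_{L^2}\lesssim 1/\e$ and $\Vert m-m_\e\Vert_{L^1}\lesssim \e\Vert m\Vert_{BV}$ are needed), and then optimises over $\e\sim\sqrt\mu$ against the uniform lower bound $\left|\int_\O\tmm-m_0\right|\ge\delta$ of Lemma~\ref{Le:MBR}. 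Your first paragraph does contain a correct and useful observation that the paper does not use: testing \eqref{LDE} with $m^*=\mathds 1_{E}$ gives, formally, $\mu\int_\O\n m^*\cdot\n\theta^*=\int_{E}\theta^*(1-\theta^*)=\int_{\O\setminus E}(\theta^*)^2$, and this last quantity \emph{is} bounded below for small $\mu$, because $\int_E\theta^*\le|E|=m_0$ forces $\int_{\O\setminus E}\theta^*\ge\delta$ and then Cauchy--Schwarz gives $\int_{\O\setminus E}(\theta^*)^2\ge\delta^2/(1-m_0)$. (You should actually carry out this computation rather than leave the lower bound as a ``delicate point''; as stated it is only asserted.)

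The genuine gap is in your central inequality. The bound
\begin{equation*}
\mu\int_\O \n m_\mu^*\cdot\n\theta^* \le \mu\,|m_\mu^*|_{TV}^{1/2}\Big(\mu\int_\O|\n\theta^*|^2\Big)^{1/2}\mu^{-1/2}
\end{equation*}
is not a valid application of Cauchy--Schwarz: $\n m_\mu^*$ is a singular vector measure (for an indicator of a Caccioppoli set it is $-\nu_E\,\mathcal H^{d-1}\llcorner\partial^*E$), it has no $L^2$ norm, and no interpolation produces the factor $|m_\mu^*|_{TV}^{1/2}$. You yourself note that the powers of $\mu$ do not close; as written the argument yields nothing. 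The estimate that actually makes your route work is different in nature: pair the measure $Dm_\mu^*$ with the \emph{continuous} function $\n\theta^*$ via the Gauss--Green formula for $BV$ functions, giving $\left|\int_\O\n\theta^*\cdot dDm_\mu^*\right|\le\Vert\n\theta^*\Vert_{L^\infty(\O)}\,|m_\mu^*|_{TV}$, and then invoke the uniform gradient bound $\Vert\n\theta_{m,\mu}\Vert_{L^\infty(\O)}\le C/\sqrt\mu$ (obtained by rescaling $x\mapsto x/\sqrt\mu$ and applying interior/boundary elliptic estimates to the rescaled equation, whose right-hand side is bounded uniformly in $m\in\mathcal M(\O)$). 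Combined with the lower bound $\mu\int_\O\n\theta^*\cdot dDm_\mu^*\ge\delta^2/(1-m_0)$ this gives exactly $|m_\mu^*|_{TV}\ge C_0/\sqrt\mu$. Without that $L^\infty$ gradient estimate — which is the real substitute for the paper's mollification step — your proof does not go through.
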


\begin{remark}[Comment on the proof of Theorem \ref{Th:FragND}] The crux of the proof is the variational formulation of \eqref{LDE}, which ensures that $\tmm$ is the unique minimiser of 
\begin{equation}
\mathcal E_{m,\mu}:\left\{u\in W^{1,2}(\O), \ u\geq 0\right\}\ni u\mapsto \frac{\mu}2\int_\O |\n u|^2-\frac12\int_\O mu^2+\frac13\int_\O u^3,
\end{equation} and which needs to be carefully estimated as $\mu \to 0^+$. 
We prove that a "shifted" version of this energy controls the quantity $\Vert \tmm-m\Vert_{L^1(\O)}$ (Lemma~\ref{Le:L1}). Therefore, using estimate~\eqref{Eq:Arvo}, we aim at controlling $\mathcal E_{m,\mu}(\tmm)$ as $\mu \to 0^+$. 
Using Modica-type estimates, {one can show} that, for a fixed $m\in \mathcal M(\O)$ that writes $m=\mathds 1_E$, there holds 
$$
\sqrt{\mu}\mathcal E_{m,\mu}(\tmm)\xrightarrow[\mu\to 0^+]{} \operatorname{Per}(E).
$$ 
However, this convergence is non-uniform {with respect to} $m$ (or, more precisely to $E$) and, since we are working with a maximisation problem, it is not possible to conclude using the convergence result above. In the one dimensional case, we propose, in the appendix, an adaptation of \cite{ModicaMortola} that makes this strategy work nonetheless.  In higher dimension, we estimate the energy using a regularisation of $m$ as a test function in the energy formulation of the equation.

\end{remark}

\subsection{Bibliographical comments on \eqref{Eq:Pv}}\label{Se:Bib}
In this section, we gather a discussion {on references connected to }the optimisation of the total population size in logistic-diffusive models. For a presentation of the literature devoted to the optimal survival ability, we refer to \cite[Introduction]{MazariThese}. 

\paragraph{Influence of the diffusivity $\mu$ on $F_\mu$.} 

Problem~\eqref{Eq:Pv} {has been first introduced in} \cite{Lou2008} and several properties had been derived in \cite{LouInfluence}, one of which is the following: for {every} $\mu>0$, the unique minimiser of $F_\mu$ in $\mathcal M(\O)$ is $m_0$; in other words
\begin{equation}
\forall \mu>0, \ \forall m\in \mathcal M(\O), \quad m(\cdot)\neq m_0\Rightarrow  F_\mu(m)>m_0.\end{equation} {This result means that} spatial homogeneity is detrimental to the population size. Furthermore, it is proved in \cite{LouInfluence} that, {if $m\in \mathcal M(\O)$ is given}, then 
\begin{equation}\label{Eq:Lou}
{F_\mu(m)\xrightarrow[\mu \to 0^+]{} m_0, \quad\text{and}\quad F_\mu(m)\xrightarrow[\mu \to \infty]{} m_0.}
\end{equation} 
Hence, for a given $m\in \mathcal M(\O)$, the low and high diffusivity limits of the functional correspond to global minima. However, it was proved in \cite[Lemma 2]{MRBSIAP} that 
$$\underset{\mu\to 0^+}{\lim\inf}\left(\sup_{m\in \mathcal M(\O)}F_\mu(m)\right)>m_0,$$ showing the intrinsic difficulty of passing to the low-diffusivity limit in problem~\eqref{Eq:Pv}. 

This point of view, where the resource distribution is considered fixed and the diffusivity is taken as a variable, was later deeply analysed in several articles. Notable among these are the following results:
\begin{enumerate}
\item In \cite{BaiHeLi}, {for a fixed $m\in L^\infty(\O)$ such that $m(\cdot)\geq 0$ and $m(\cdot)\neq 0$}, the authors consider the optimisation problem
\begin{equation}\label{Eq:BHL}\sup_{\mu>0}\left(E_\mu(m):=\frac{F_\mu(m)}{\int_\O m}\right)\end{equation} and observe that, {in the one-dimensional case $\O=(0;1)$}, there holds 
\begin{equation}E_\mu(m)\leq 3.\end{equation} This bound is sharp (a maximising sequence is explicitly constructed) and is not reached by any function $m$. This work has been later extended {to} the higher-dimensional case in \cite{InoueKuto} and the authors prove that, in that case (i.e. in dimension $d\geq 2$), there holds
\begin{equation}
\sup_{\substack{m\in L^\infty(\O)\\ m\geq 0, \ m\neq 0}}\sup_{\mu>0}\, E_\mu(m)=+\infty.
\end{equation}
\item In \cite{LiangLou}, a function $m$ such that the map $\mu \mapsto F_\mu(m)$ has several local maxima is constructed. {It emphasizes} the intrinsic complexity of the interplay between the population size functional and the parameter $\mu>0$.
\end{enumerate}

Finally, let us also note that a related problem, where the underlying model is a system of ODEs with identical migration rates, was considered in \cite{LiangZhang}.
 
We also point out to two surveys \cite{LamLiuLou,MNPChapter} and to the references therein for up-to-date considerations about the influence of spatial heterogeneity for single or multiple species models or for optimisation problems in mathematical biology.

\section{Proofs of Theorems \ref{Th:BangBang} and \ref{Th:Jcroi}}
\subsection{Proof of Theorem \ref{Th:BangBang}}

The proof of this Theorem relies on a new formulation of the second order optimality conditions for the problem \eqref{Eq:Pv}. {Let us first compute the necessary optimality conditions of the first and second orders. }
\paragraph{Computation of optimality conditions}

It is established in \cite[Lemma 4.1]{Ding2010} that, for any $\mu>0$ the map $\mathcal M(\O)\ni m\mapsto \tmm$ is differentiable {at the first order} in the {sense of }G\^ateaux. {Adapting their proof yields without difficulty its second order G\^ateaux-differentiability}. Let us fix  $m\in \mathcal M(\O)$ and  an admissible perturbation\footnote{{The wording ``admissible perturbation'' means that $h$ belongs to the tangent cone to the set $\mathcal{M}(\O)$ at $m$.
It corresponds to the set of functions $h\in L^\infty(\O)$ such that, for any sequence of positive real numbers $\varepsilon_n$ decreasing to $0$, there exists a sequence of functions $h_n\in L^\infty(\O)$ converging to $h$ as $n\rightarrow +\infty$, and $m+\varepsilon_nh_n\in\mathcal{M}(\O)$ for every $n\in\N$.}\label{footnote:cone}
} $h\in L^\infty(\O)$. 
Let us {denote by} $\dtmm$ (resp. $\ddtmm$) the first (resp. second) G\^ateaux-derivative of $\theta_{\cdot,\mu}$ at $m$ in the direction $h$. It is {standard} (we refer to \cite[Lemma 4.1]{Ding2010}) to see that $\dtmm$ solves
\begin{equation}\label{LDEdot}
\left\{\begin{array}{ll}
\mu \Delta \dot\theta_{m,\mu}+(m-2\tmm) \dot\theta_{m,\mu}=-h\tmm & \text{ in }\O,\\
\frac{\partial  \dot\theta_{m,\mu}}{\partial \nu}=0 & \text{ on }\partial \O.
\end{array}\right.
\end{equation} 
\begin{remark}The fact that $\dtmm$ is uniquely determined by that equation (in other words, that \eqref{LDEdot} has a unique solution can be proved as in \cite{Ding2010,Mazari2020}. For the sensitivity analysis and computation of the G\^ateaux-derivatives, we also refer to \cite{NagaharaYanagida}.\end{remark} 
To derive a tractable equation for the G\^ateaux derivative $\dot{F_\mu}(m)[h]$ of the functional $F_\mu$ at $m$ in the direction $h$, let us introduce the adjoint state $\pmm$ as the solution of
\begin{equation}\label{Eq:Ajoint}
\left\{\begin{array}{ll}
\mu \Delta \pmm +\pmm(m-2\tmm)=-1 & \text{in }\O,\\ 
\frac{\partial \pmm}{\partial \nu}=0 & \text{on }\partial\O,
\end{array}\right.
\end{equation} so that, multiplying \eqref{LDEdot} by $\pmm$ and integrating by parts readily gives
\begin{equation}
\int_\O \pmm \tmm h=\int_\O \dtmm=\dot F_\mu(m)[h].
\end{equation}
{It is standard in optimal control theory (see e.g. \cite{MR1155489})} that, if $m_\mu^*$ is a solution of \eqref{Eq:Pv} then there exists a constant $c$ such that 
\begin{equation}\label{Eq:Opt1}\{0<m_\mu^*<1\}\subset \{\theta_{m_\mu^*}p_{m_\mu^*,\mu}=c\}.\end{equation} 

\begin{remark}As is done in \cite{Mazari2020}, the sets $\{m_\mu^*=1\}$ and $\{m_\mu^*=0\}$ can be described in terms of level sets of the so-called switching function $\tmm\pmm$ but we do not detail it since these are not informations we will use in the proof.
\end{remark}

Let us turn to the computation of the second order G\^ateaux  derivative of the functional $F_\mu$ in the direction $h$, which will be denoted $\ddot F_\mu(m)[h,h]$. To obtain it, we first recall  (see \cite[Equation (18)]{Mazari2020}) that $\ddtmm$ solves

\begin{equation}\label{LDEdotdot}
\left\{\begin{array}{ll}
\mu \Delta \ddot\theta_{m,\mu}+(m-2\tmm) \ddot\theta_{m,\mu}=-2h\dot\theta_{m,\mu}+2\dot\theta_{m,\mu}^{2} & \text{ in }\O,\\
\frac{\partial  \ddot\theta_{m,\mu}}{\partial \nu}=0 & \text{ on }\partial \O.
\end{array}\right.
\end{equation}

Multiplying \eqref{LDEdotdot} by $\pmm$ and integrating by parts yields

\begin{align*}
\int_{\O}\ddot\theta_{m,\mu} &= 2\int_{\O}\left(h\dot\theta_{m,\mu}-\dot\theta_{m,\mu}^{2}\right)\pmm =2\int_{\O}\left(\displaystyle\frac{-\mu \Delta \dot\theta_{m,\mu}-(m-2\tmm) \dot\theta_{m,\mu}}{\tmm}\dot\theta_{m,\mu}-\dot\theta_{m,\mu}^{2}\right)\pmm\\
&=2\int_{\O}\left(-\mu \Delta \dot\theta_{m,\mu}-(m-\tmm) \dot\theta_{m,\mu}\right) \frac{p \dot\theta_{m,\mu}}{\tmm}.\end{align*}
Let us {introduce} $\umm:=\frac{\pmm}\tmm$. We thus obtain
\begin{eqnarray}
\int_{\O}\ddot\theta_{m,\mu} 
&=& 2\int_{\O}\left(\mu \nabla (\umm \dot\theta_{m,\mu}) \nabla\dot\theta_{m,\mu}-(m-\tmm) \dot\theta_{m,\mu}^{2}\umm\right) \nonumber \\
&=&2\int_{\O}\umm\left(\mu|\nabla\dot\theta_{m,\mu}|^{2}-\left(m-\tmm+\frac{\Delta \umm}{2\umm}\right) \dot\theta_{m,\mu}^{2}\right).\label{expr:thetaddot}
\end{eqnarray}

Furthermore, it is straightforward to see that
\begin{equation}\label{Eq:InfTheta}
\forall m\in \mathcal M(\O),\qquad  \inf_\O \tmm>0.
\end{equation} 
Furthermore, we have the following result:
\begin{lemma}\label{Cl:InfP}
For every $m\in \mathcal M(\O)$, \begin{equation}\inf_\O \pmm>0.\end{equation}
\end{lemma}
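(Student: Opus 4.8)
\textbf{Proof proposal for Lemma \ref{Cl:InfP}.}
The plan is to show that $\pmm$ is positive everywhere, and then to conclude that its infimum over $\overline{\O}$ is strictly positive by compactness and continuity. The adjoint state $\pmm$ solves
\begin{equation*}
\left\{\begin{array}{ll}
\mu \Delta \pmm +\pmm(m-2\tmm)=-1 & \text{in }\O,\\
\frac{\partial \pmm}{\partial \nu}=0 & \text{on }\partial\O,
\end{array}\right.
\end{equation*}
so $\pmm$ is the solution of a linear elliptic equation with zeroth-order coefficient $c_m:=m-2\tmm\in L^\infty(\O)$ and right-hand side $-1$. First I would check that this linear operator is invertible; this is exactly the content of the remark following \eqref{LDEdot} that \eqref{LDEdot} is uniquely solvable, since the operators appearing in \eqref{LDEdot}, \eqref{LDEdotdot} and \eqref{Eq:Ajoint} share the same principal and zeroth-order parts $\mu\Delta+(m-2\tmm)$. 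Hence $\pmm$ exists and is unique; by elliptic regularity (the boundary being $\mathscr C^2$ and $m\in L^\infty$), $\pmm\in W^{2,q}(\O)$ for all $q<\infty$, hence $\pmm\in \mathscr C^{1}(\overline\O)$.

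The key point is positivity. The standard way is the following: one knows that the principal eigenvalue $\lambda_1(\mu,c_m)$ of $-\mu\Delta - c_m$ on $\O$ with Neumann conditions is strictly positive. Indeed, the logistic equation \eqref{LDE} can be rewritten as $\mu\Delta\tmm + (m-\tmm)\tmm=0$, i.e. $\tmm>0$ is a principal eigenfunction of $-\mu\Delta-(m-\tmm)$ with eigenvalue $0$; since $m-2\tmm < m-\tmm$ pointwise (because $\tmm>0$ by \eqref{Eq:InfTheta}), strict monotonicity of the principal eigenvalue with respect to the zeroth-order coefficient gives $\lambda_1(\mu,c_m)>\lambda_1(\mu,m-\tmm)=0$. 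Consequently the operator $-\mu\Delta-c_m$ satisfies the maximum principle: if $-\mu\Delta w - c_m w \geq 0$ in $\O$ with $\partial_\nu w=0$ on $\partial\O$, then $w\geq 0$, and in fact $w>0$ in $\overline\O$ unless $w\equiv 0$ (strong maximum principle, Hopf lemma). Applying this to $w=\pmm$, for which $-\mu\Delta \pmm - c_m\pmm = 1 > 0$, yields $\pmm>0$ everywhere on $\overline\O$. Since $\pmm$ is continuous on the compact set $\overline\O$, it attains its minimum, which is therefore strictly positive: $\inf_\O\pmm>0$.

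The only mild obstacle is making sure one is entitled to invoke the anti-maximum/maximum principle in the form above, i.e. that the sign of the principal eigenvalue is what controls solvability and positivity for the Neumann problem with an indefinite zeroth-order term. This is classical (see, e.g., Berestycki–Nirenberg–Varadhan type results, or the variational characterisation $\lambda_1(\mu,c_m)=\inf_{\|v\|_{L^2}=1}\int_\O(\mu|\n v|^2 - c_m v^2)$ together with the fact that a positive supersolution exists); I would simply cite \cite{CantrellCosner1} or \cite{MR1105497} for this spectral monotonicity and the associated maximum principle, exactly as is done elsewhere for \eqref{LDE}. An alternative, slightly more hands-on route avoiding eigenvalue monotonicity: multiply the equation for $\pmm^-:=\max(-\pmm,0)$ and use that $\tmm$ is a positive supersolution of $-\mu\Delta - c_m$ (since $-\mu\Delta\tmm - c_m\tmm = \tmm^2 \geq 0$) to build a suitable comparison, concluding $\pmm^-\equiv0$; then Harnack or the strong maximum principle upgrades $\pmm\geq0$ to $\pmm>0$ on $\overline\O$.
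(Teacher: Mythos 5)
Your proposal is correct and follows essentially the same route as the paper: the positivity of the principal eigenvalue $\lambda(m-2\tmm,\mu)$, obtained by strict monotonicity from $\lambda(m-\tmm,\mu)=0$ (with $\tmm>0$ as principal eigenfunction), yields $\pmm\geq 0$, and the strong maximum principle upgrades this to $\inf_\O \pmm>0$. The only cosmetic difference is that the paper makes the implication ``positive principal eigenvalue $\Rightarrow$ nonnegativity of $\pmm$'' explicit by testing the adjoint equation against $(\pmm)_-$ and comparing with the Courant--Fischer quotient, which is precisely the ``alternative, more hands-on route'' you sketch at the end.
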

\begin{proof}[Proof of Lemma~\ref{Cl:InfP}]
We start from the observation that $\tmm$ solves \eqref{LDE} implies that the principal eigenvalue $\lambda(m-\tmm,\mu)$ of the operator $-\mu\Delta-(m-\theta)\operatorname{Id}$ is zero \cite{LouInfluence}. Since $\tmm>0$ in $\O$, the first eigenvalue $\lambda(m-2\tmm,\mu)$ of the operator $\mathcal L_m:=-\mu\Delta-(m-2\tmm)\operatorname{Id}$ satisfies
\begin{equation}\label{Eq:Halm}
\lambda(m-2\tmm,\mu)>0,
\end{equation}
 as a consequence of the monotonicity of eigenvalues \cite{DockeryHutsonMischaikowPernarowskiEvolution}.
Since $\pmm$ satisfies $\mathcal L_m\pmm=1>0$ with Neumann boundary conditions, the conclusion follows from multiplying the equation on $\pmm$ by the negative part $\left(\pmm\right)_-$ and integrating by parts: it yields
\begin{equation}
\mu\int_\O |\n (\pmm)_-|^2-\int_\O (\pmm)_-^2(m-2\tmm)=-\int_\O (\pmm)_-<0\text{ if }(\pmm)_-\neq 0.
\end{equation} 
However, {according to the Courant-Fischer principle},
\begin{equation}
\lambda(m-2\tmm,\mu)=\inf_{\substack{u\in W^{1,2}(\O)\\ \int_\O u^2=1}}\mu\int_\O |\n u|^2-\int_\O u^2(m-2\tmm)>0
\end{equation} 
and therefore, it follows that $\pmm(\cdot)\geq 0$ and $\pmm(\cdot)\neq 0$ in $\O$. To {conclude}, it suffices to apply the strong maximum principle.
\end{proof}

{According to Lemma~\ref{Cl:InfP} and \eqref{Eq:InfTheta}}, it follows that $\umm$ satisfies 
\begin{equation}\label{Eq:U} 
\inf_{\O}\umm>0.
\end{equation} 
Furthermore, standard elliptic estimates entail 
\begin{equation}
\forall p\in (1;+\infty),\quad  \tmm, \pmm \in W^{2,p}(\O),
\end{equation}
and from Sobolev embeddings, we get 
\begin{equation}\label{Eq:BorneC1}
\tmm,\pmm\in \mathscr C^{1,\alpha}({\overline{\O}})
\end{equation}  for any $\alpha \in (0;1)$. {Using the equations on $\tmm$ and $\pmm$, this gives, in turn that $\Delta \tmm$ and $\Delta \pmm$ belong to $L^\infty(\O)$.
It follows, by computing explicitly $\Delta \umm$, that $\Delta \umm$ belongs to $L^\infty(\O)$.}

 If we then define
$V_{m,\mu}:=\left(m-\tmm+\frac{\Delta \umm}{2\umm}\right)$ we have, as a consequence, that 
\begin{equation}\label{Eq:V} 
V_{m,\mu}\in L^\infty(\O).
\end{equation}
{Starting from \eqref{expr:thetaddot},} $\ddot F_\mu(m)[h,h]$ {rewrites} in the more tractable form
\begin{equation}\label{Eq:DSD}\ddot F_\mu(m)[h,h]=\int_\O \ddtmm=\mu \int_\O \umm |\n \dtmm|^2-\int_\O V_{m,\mu}\dtmm^2.
\end{equation}

{This expression is crucial to proving Theorem~\ref{Th:BangBang}.}
\begin{proof}[Proof of Theorem \ref{Th:BangBang}]
 Let us argue by contradiction, assuming the existence of a maximiser $m$ (for the sake of readability, we drop the subscript $m_\mu^*$) of $F_\mu$ in $\mathcal M(\O)$ such that {the set
$\tilde \O:=\{0<m<1\}$ is of positive Lebesgue measure.}

Our goal is now to construct {an admissible perturbation $h\in L^\infty(\O)$ (see Footnote~\ref{footnote:cone})} such that
\begin{equation}\label{Eq:Toss}
h\text{ is supported in }\tilde \O, \qquad \ddot F_\mu(m)[h,h]>0.
\end{equation}
Let us first note that from the optimality conditions \eqref{Eq:Opt1}, if $h$ is supported in $\tilde \Omega$ and satisfies $\int_{\tilde \Omega}h=0$, then, for the constant $c$ given in \eqref{Eq:Opt1} we have 
$$\dot F_\mu(m)[h]=\int_\O h \tmm \pmm=c\int_{\tilde \O}h=0.$$
{Hence, if $h$ satisfies \eqref{Eq:Toss}, then a Taylor expansion yields $F_\mu(m+\varepsilon h)-F_\mu(m)=\frac{\varepsilon^2}{2}\ddot F_\mu(m)[h,h]+\operatorname{o}(\varepsilon^2)$, which leads to a contradiction whenever $\varepsilon>0$ is chosen small enough.}
{It is standard to show that any perturbation $h$ supported in $\O$ is admissible if, and only if }$\int_\O h=0$.

\begin{remark}
{To implement the previous construction}, it suffices {in fact to construct $h\in L^2(\O)$ so that }\eqref{Eq:Toss} {is satisfied and $\int_\O h=0$, forgetting that $h$ has to belong to $L^\infty(\O)$}. Indeed, let us assume that such a $h\in L^2(\O)$ exists. Then, we introduce the sequence $h_n:=h\mathds 1_{|h|\leq n}-\int_\O h\mathds 1_{|h|\leq n} \in L^\infty(\O)$, which converges weakly in $L^2(\O)$ to $h$ {as $n\to \infty$}. By elliptic regularity, it entails strong $W^{1,2}$-regularity of $\dot \theta_{m,\mu}[h_n]$ to $\dtmm$ as $n\to \infty$ and thus the convergence of second order derivatives. Choosing $n$ large enough yields the required contradiction.
{In what follows, we will }hence look for a function $h\in L^2(\O)$ satisfying \eqref{Eq:Toss} and $\int_\O h=0$. 
\end{remark}

{According to} \eqref{Eq:DSD}, by using \eqref{Eq:V} and \eqref{Eq:U}, there exist two positive constants $A_1$ and $A_2$ such that
\begin{equation}\ddot F_\mu(m)[h,h]\geq A_1\int_\O |\n \dtmm|^2-A_2\int_\O \dtmm^2.\end{equation}
To obtain a contradiction, it hence suffices to construct a perturbation $h\in L^2(\O)$ with support in $\tilde \O$  satisfying $\int_\O h=0$ and such that
\begin{equation}\label{Eq:Fmot}\int_\O|\n \dtmm|^2>\frac{A_2}{A_1}\int_\O \dtmm^2.\end{equation}
Let us prove that such a perturbation $h$ exists. To {this aim}, let us introduce the operator $\mathcal L$ defined by
\begin{equation}
{\mathcal L_m:H^2(\O)\ni \psi\mapsto -\mu \Delta\psi-(m-2\tmm)\psi\in L^2(\O).}
\end{equation} 
{This operator is self-adjoint and of compact inverse in $L^2(\O)$, as a consequence of the spectral estimate \eqref{Eq:Halm}}. As a consequence, there exists a sequence of eigenvalues
\begin{equation}\lambda_1(\mathcal L_m)<\lambda_2(\mathcal L_m)\leq \dots\leq \lambda_k(\mathcal L_m)\xrightarrow[k\to \infty]{}  +\infty,\end{equation} each of these eigenvalues being associated with a $L^2$-normalised eigenfunction $\psi_k$ {solving}
\begin{equation}\begin{cases}
\mathcal L_m \psi_k=\lambda_k(\mathcal L)\psi_k\text{ in }\O, 
\\ \frac{\partial \psi_k}{\partial \nu}=0\text{ on }\partial \O, 
\\ \int_\O \psi_k^2=1.
\end{cases}
\end{equation}

Let us fix $K\in \N\backslash\{0\}$ { that will be fixed later and } consider the family of linear functionals $\{R_k\}_{k=0, \dots,K}\subset \left(L^2\left(\tilde\O\right)'\right)^K$ defined by
{\begin{equation}
\forall f \in L^2(\tilde\O), \quad R_0(f):=\int_\O \mathds 1_{\tilde\O} f\quad \text{and}\quad R_k(f):=\int_\O \mathds 1_{\tilde\O}\tmm \psi_k f
\end{equation}
for every $k\in \llbracket 1,K\rrbracket $.}

Let us define $E_k:=\ker(R_k)$ for every $k\in \llbracket 0,K\rrbracket $. Observe that each space $E_k$ is of codimension at most 1. In particular,
\begin{equation}
E:=\cap_{k=0}^K E_k\subset L^2(\tilde\O)
\end{equation} 
is of codimension at most $(K+1)$ in $L^2(\tilde\O)$ and is non-empty. 
Let us hence pick $F_K\in E\backslash\{0\}$ {and} assume by homogeneity, that 
\begin{equation}
\int_\O |F_K\mathds 1_{\tilde\O} \tmm|^2=1.
\end{equation} 
Let us extend $F_K$ to $\O$ by setting {$F_K=F_K\mathds 1_{\tilde\O}$}. {According to} the definition of $F_K$ it follows that

\begin{itemize}
\item[(i)] $\int_{\tilde\O} F_k=\int_\O \mathds 1_{\tilde\O} F_K=0$,
\item[(ii)] $\forall k\in \llbracket 0, K\rrbracket$, one has $\int_\O (F_k)\mathds 1_{\tilde\O}\tmm=0$ so that, defining $h_K:=F_K\mathds 1_{\tilde\O}$ we have that $h_K$ is supported in $\tilde \O$, belongs to $L^2(\O)$ and satisfies $\int_\O h_K=0$. Furthermore, the function $\eta_K=-F_K\mathds 1_{\tilde\O}\tmm$ {expands as} 
\begin{equation}
\eta_K=\sum_{\ell\geq K+1}\alpha_\ell\psi_\ell\quad \text{ with } \quad \sum_{\ell\geq K+1}\alpha_\ell^2=1.
\end{equation}
\end{itemize}
Finally, we observe that, for this perturbation $h_K$, $\dtmm$ solves
\begin{equation}
\begin{cases}
\mathcal L_m \dtmm=\eta_K, 
\\ \frac{\partial \dtmm}{\partial \nu}=0,
\end{cases}
\end{equation}
whence

\begin{equation}
\dtmm=\sum_{\ell\geq K+1}\frac{\alpha_\ell}{\lambda_\ell(\mathcal L_m)} \psi_\ell.\end{equation}
Using the {$L^2(\O)$-}orthogonality property of the eigenfunctions, we get 
\begin{equation}
\int_\O \dtmm^2=\sum_{\ell\geq K+1}\frac{\alpha_\ell^2}{\lambda_\ell(\mathcal L_m)^2}.
\end{equation}
{and, similarly,}
\begin{align*}
\mu\int_\O|\n \dtmm|^2-\int_\O (m-2\tmm)\dtmm^2&=\sum_{\ell=K+1}^\infty \frac{\alpha_\ell^2}{\lambda_\ell(\mathcal L_m)}. 
\end{align*}
{We infer the existence of $M>0$ such that }
\begin{align*}
\mu \int_\O |\n \dtmm|^2&\geq \sum_{\ell=K+1}^\infty \frac{\alpha_\ell^2}{\lambda_\ell(\mathcal L_m)}-M\int_\O \dtmm^2\geq \sum_{\ell=K+1}^\infty \frac{\alpha_\ell^2}{\lambda_\ell(\mathcal L_m)}-M\sum_{\ell=K+1}^\infty \frac{\alpha_\ell^2}{\lambda_\ell(\mathcal L_m)^2}\\
&={\sum_{\ell=K+1}^\infty \frac{\alpha_\ell^2}{\lambda_\ell(\mathcal L_m)^2}(\lambda_\ell(\mathcal L_m)-M)}
\\&\geq \left(\lambda_{K+1}(\mathcal L_m)-M\right)\int_\O \dtmm^2.
\end{align*}

The conclusion follows by taking $K$ large enough {so that $\lambda_{K+1}(\mathcal L_m)>M$, which concludes} the proof.\end{proof}

\subsection{Comparison with the results of \cite{NagaharaYanagida}}\label{Se:Comparison}
{This section is dedicated to an} explanation of the main difference with the proof of \cite{NagaharaYanagida}. As recalled in Remark \ref{Re:CompaNY}, the main result of \cite{NagaharaYanagida} {reads:} \textit{if $\tilde \O=\{0<m<1\}$ has an interior point, then it cannot be a solution of Problem~\eqref{Eq:Pv}.} 

Although they do not use the expression \eqref{Eq:DSD} but an alternative {expression} of the second order G\^ateaux-derivative $\ddot F_\mu$, their idea, to reach a contradiction, {is to reason backwards, by finding a function $\psi$, that "should" act as $\dtmm$, well chosen to yield a contradiction, and then constructing an admissible perturbation $h$ supported in the interior of $\{0<m<1\}$ such that $\dtmm=\psi$.}

However, we think that the authors, when {carrying out their reasoning to reach }a contradiction, made a slight mistake in the last line of the second set of equalities on \cite[Page 11]{NagaharaYanagida}. {Indeed, }it seems to us that it is implicitly assumed that if $\dtmm$ is compactly supported in $\tilde \O$, then so is $\ddtmm$, which is in general wrong. 

Nevertheless, we propose {hereafter} an alternative proof of their result that uses their idea of first fixing  a desirable function $\p$, and then {proving the existence of an admissible perturbation $h$, compactly supported in $\tilde \O=\{0<m<1\}$ such that $\p=\dtmm$, leading to} a positive second order derivative.

{Let us} argue by contradiction, { considering a solution $m$ of \eqref{Eq:Pv}} such that the set 
\begin{equation}\tilde\O:=\{0<m<1\}\end{equation} has a non-empty interior (in particular, it is of positive measure). As a consequence of  \eqref{Eq:Opt1} there exists $c$ such that 
\begin{equation}\label{Eq:Opt2}\tmm\pmm=c\text{ in }\tilde \O.\end{equation} Let us pick two interior points $x_0, y_0$ of  $\tilde\O$ and let $r>0$ be such that 
\begin{equation}
\B(x_0;r), \B(y_0;r)\subset \tilde \O, \qquad \B(x_0;r)\cap \B(y;r)=\emptyset.
\end{equation} 
Let $\chi\in \mathcal D(\R^d)$ be a $\mathscr C^\infty$, radially symmetric, non-negative function with compact support in $\B(0;{r})$ such that $\chi(0)=1$. For every $k\in \N$, let us {introduce $\psi_k$ defined by}  
\begin{equation}
\psi_k(x):=\chi(x-x_0)\cos(k|x-x_0|)-\chi(x-y_0)\cos(k|x-y_0|).
\end{equation}
\begin{lemma}\label{Eq:Inter} 
For any $k\in \N$, there exists an admissible perturbation $h_k$ supported in $\tilde \O$, such that 
\begin{equation}
\psi_k=\dtmm[h_k],
\end{equation}
{where $\dtmm[h_k]$ denotes the unique solution of \eqref{LDEdot} associated to the perturbation choice $h=h_k$.}
\end{lemma}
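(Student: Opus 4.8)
The plan is to show that $\psi_k$, as explicitly constructed, is a smooth function compactly supported in $\tilde\O$ (since $\chi$ is supported in $\B(0;r)$ and $\B(x_0;r),\B(y_0;r)\subset\tilde\O$ are disjoint), and then to \emph{reverse-engineer} the perturbation $h_k$ from equation \eqref{LDEdot}. Concretely, if $\psi_k$ is to equal $\dtmm[h_k]$, then it must satisfy
\begin{equation}
\mu\Delta\psi_k+(m-2\tmm)\psi_k=-h_k\tmm\quad\text{in }\O,\qquad \frac{\partial\psi_k}{\partial\nu}=0\text{ on }\partial\O.
\end{equation}
Since $\psi_k$ is compactly supported in the interior of $\tilde\O$, the Neumann boundary condition is automatically satisfied. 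So the only thing to do is to \emph{define}
\begin{equation}
h_k:=-\frac{\mu\Delta\psi_k+(m-2\tmm)\psi_k}{\tmm},
\end{equation}
which makes sense because $\inf_\O\tmm>0$ by \eqref{Eq:InfTheta}. By construction $h_k$ is supported in $\overline{\B(x_0;r)}\cup\overline{\B(y_0;r)}\subset\tilde\O$, so it is supported in $\tilde\O$; moreover $h_k\in L^\infty(\O)$ because $\psi_k\in\mathscr C^\infty_c$, $m\in L^\infty$, and $\tmm\in W^{2,p}\subset\mathscr C^{1,\alpha}$ with $\inf_\O\tmm>0$ (so $1/\tmm$ is bounded). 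Reading the identity backwards, $\psi_k$ is then a solution of \eqref{LDEdot} with right-hand side $-h_k\tmm$, and by the uniqueness statement for \eqref{LDEdot} recalled in the excerpt, $\psi_k=\dtmm[h_k]$.

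The remaining point is admissibility of $h_k$: one must check that $h_k$ belongs to the tangent cone to $\mathcal M(\O)$ at $m$, in the sense of Footnote~\ref{footnote:cone}. Here is where the hypothesis that $x_0,y_0$ are \emph{interior} points of $\tilde\O=\{0<m<1\}$ is used crucially. On a small ball around each of these points one can, after possibly shrinking $r$, ensure that $m$ stays bounded away from $0$ and $1$ on the support of $h_k$ — more precisely, since $\mathrm{supp}(h_k)$ is a compact subset of the open set $\tilde\O$ where $0<m<1$ pointwise, one should argue that for small $\e>0$ the function $m+\e h_k$ still lies in $\mathcal M(\O)$: the pointwise bounds $0\le m+\e h_k\le1$ hold for $\e$ small by compactness of the support together with the fact that $h_k\in L^\infty$, and the integral constraint $\int_\O(m+\e h_k)=m_0$ holds provided $\int_\O h_k=0$. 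This last fact, $\int_\O h_k=0$, is itself something to verify; integrating the defining relation and using $\int_\O\Delta\psi_k=0$ (Neumann/compact support) gives $\int_\O h_k\tmm=-\int_\O(m-2\tmm)\psi_k$, which need not vanish, so one may need to correct $h_k$ by adding a small constant multiple of an auxiliary fixed perturbation supported in $\tilde\O$ with nonzero mean — or, more simply, to note that the admissible-perturbation framework of Footnote~\ref{footnote:cone} only requires a \emph{sequence} $m+\e_n h_n\in\mathcal M(\O)$ with $h_n\to h_k$, so one can absorb the mean-zero correction in the $h_n$'s. Since the paper's convention (as used in the proof of Theorem~\ref{Th:BangBang}) is that any $L^2$ perturbation supported in $\O$ with zero mean is admissible, the cleanest route is to subtract the mean: replace $h_k$ by $h_k-\frac{1}{|\tilde\O|}\int_{\tilde\O}h_k$, which changes $\dtmm$ only by the solution of \eqref{LDEdot} with a constant right-hand side times $\tmm$ — but this would change $\psi_k$, so instead one keeps $h_k$ as defined and invokes the tangent-cone definition directly.

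The main obstacle I expect is precisely this bookkeeping around the integral constraint: the naive $h_k$ does not have zero mean, yet we need $\psi_k=\dtmm[h_k]$ \emph{exactly} (the later argument will choose $k$ large and exploit the oscillation of $\psi_k$, so one cannot afford to perturb $\psi_k$). The resolution is to observe that the constraint $\int_\O m=m_0$ only forces $\int_\O h=0$ for \emph{admissible} directions, and that the later use of Lemma~\ref{Eq:Inter} will combine $\psi_k$ with a reference perturbation to restore zero mean at the level of the quadratic form $\ddot F_\mu(m)[\cdot,\cdot]$; alternatively, since $x_0$ and $y_0$ play symmetric roles and $\chi$ is radial, one checks that $\int_\O h_k\tmm$ and $\int_\O h_k$ are controlled and can be killed by a symmetric choice. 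For the statement of Lemma~\ref{Eq:Inter} as written — existence of \emph{an} admissible $h_k$ supported in $\tilde\O$ with $\psi_k=\dtmm[h_k]$ — it suffices to take $h_k:=-\bigl(\mu\Delta\psi_k+(m-2\tmm)\psi_k\bigr)/\tmm$, verify membership in $L^\infty$ and the support property as above, and verify admissibility by exhibiting $m+\e_n h_n\in\mathcal M(\O)$ using that $\mathrm{supp}(h_k)\Subset\tilde\O$ and Footnote~\ref{footnote:cone}, with the zero-mean requirement handled by a vanishing correction inside the sequence $h_n$.
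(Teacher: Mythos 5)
Your construction of $h_k$ is exactly the paper's: set $h_k:=-\bigl(\mu\Delta\psi_k+(m-2\tmm)\psi_k\bigr)/\tmm$, note that it lies in $L^\infty(\O)$ because $\psi_k\in W^{2,\infty}$ and $\inf_\O\tmm>0$, and that it is compactly supported in $\tilde\O$. But there is a genuine gap at the one point you flag as "bookkeeping": you never actually prove $\int_\O h_k=0$, and all of your proposed workarounds are either inadmissible (subtracting the mean changes $\psi_k$, which you correctly note cannot be afforded) or defer the problem to a later stage that the lemma, as stated, does not permit ($h_k$ itself must be admissible). Your attempted computation — integrating the defining relation against the constant function $1$ to get $\int_\O h_k\tmm=-\int_\O(m-2\tmm)\psi_k$ — tests against the wrong function and indeed leads nowhere.

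The missing idea is to test against the \emph{adjoint state} $\pmm$ and to exploit the first-order optimality condition together with the antisymmetric design of $\psi_k$. Multiplying $\mu\Delta\psi_k+\psi_k(m-2\tmm)=-h_k\tmm$ by $\pmm$ and integrating by parts twice, the adjoint equation \eqref{Eq:Ajoint} turns the left-hand side into $-\int_\O\psi_k$, so that
\begin{equation*}
\int_\O\psi_k=\int_\O\tmm\pmm\,h_k=c\int_\O h_k,
\end{equation*}
where the last equality uses that $h_k$ is supported in $\tilde\O$ and that $\tmm\pmm\equiv c$ on $\tilde\O$ by the optimality condition \eqref{Eq:Opt2} (this is the only place where optimality of $m$ enters the lemma, and it is essential). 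Since $c>0$ (because $\inf_\O\tmm>0$ and $\inf_\O\pmm>0$ by Lemma~\ref{Cl:InfP}), and since $\int_\O\psi_k=0$ by construction — this is precisely why $\psi_k$ is built as the \emph{difference} of two identical radial bumps centred at $x_0$ and $y_0$ — one concludes $\int_\O h_k=0$ exactly, with no correction needed. Without this observation the lemma is not proved; with it, the rest of your argument (support, $L^\infty$ bound, uniqueness for \eqref{LDEdot}) goes through as you wrote it.
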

\begin{proof}[Proof of Lemma~\ref{Eq:Inter}]
Let us {introduce $h_k$, defined by}
\begin{equation}
h_{k}:= \frac{1}{\tmm}\left(-\mu \Delta \psi_{k}-(m-2\tmm) \psi_{k}\right).
\end{equation}
Since, by construction $\psi_k\in W^{2,\infty}(\O)$ and since $\inf_\O \tmm>0$ we get that $h_k\in L^\infty(\O)$. {Moreover,} since $\chi$ is compactly supported in $\tilde \O$, {so is $h_k$}. Since $\tilde \O=\{0<m<1\}$, the only condition we have to check to ensure that $h_k$ is admissible at $m$ is that 
\begin{equation}\int_\O h_k=0.\end{equation}
By construction, {one has} 
\begin{equation}
\mu\Delta \psi_k+\psi_k(m-2\tmm)=-h_k\tmm\quad \text{in }\O,
\end{equation} 
so that, {by multiplying this equation by $\pmm$ and integrating twice} by parts we obtain
\begin{equation}\int_\O \psi_k=-\int_\O \tmm \pmm h_k=-c\int_\O h_k\end{equation}
where the last equality comes from \eqref{Eq:Opt2}. Since by construction, $\int_\O \psi_k=0$ the conclusion follows and hence $h_k$ is an admissible perturbation.
\end{proof}
Now, {it remains to prove that}
\begin{equation}\exists k\in \N^*,\quad \ddot F_\mu(m)[h_k,h_k]= \mu\int_\O \umm |\n \psi_k|^2-\int_\O V_m\psi_k^2>0.\end{equation} Since $\sup_{k\in \N}\Vert \psi_k\Vert_{L^\infty}\leq \Vert \chi\Vert_{L^\infty}$ and since $V_{m,\mu}\in L^\infty(\O)$ {according to \eqref{Eq:V}}, it {is enough} to show that 
\begin{equation}\label{Eq:Goal}
\int_\O \umm |\n \psi_k|^2\xrightarrow[k\to \infty]{} +\infty.
\end{equation}
Using the fact that $\inf_\O \umm>0$ from Estimate \eqref{Eq:U},  \eqref{Eq:Goal} is implied by 
\begin{equation}\label{Eq:Goal2}
\int_\O  |\n \psi_k|^2\xrightarrow[k\to \infty]{}  +\infty.
\end{equation} Finally, since $\mathbb B(x_0;r)\cap \mathbb B(y_0;r)=\emptyset$, \eqref{Eq:Goal2} is in turn implied by

\begin{equation}\label{Eq:Goal3}
\int_{\B(x_0;r)}  |\n \psi_k|^2\xrightarrow[k\to \infty]{}  +\infty.
\end{equation}

Let us now establish \eqref{Eq:Goal3}. By using polar coordinates, {one has}
\begin{align*}
\int_{\B(x_0;r)}|\n \psi_k|^2&=(2\pi)^{d-1}\int_0^r s^{d-1}\left(k\sin(ks)\chi(s)+\cos(ks)\frac{\partial \chi}{\partial s}(s)\right)^2ds
\\&=(2\pi)^{d-1}\int_0^r s^{d-1}k^2\sin^2(ks)\chi(s)\, ds \tag{$I_{1,k}$}\label{Iun}
\\&+2(2\pi)^{d-1}k\int_0^r s^{d-1} \sin(ks)\cos(ks)\chi(s)\frac{\partial \chi}{\partial s}(s)\, ds\tag{$I_{2,k}$}\label{Ideux}
\\&+(2\pi)^{d-1}\int_0^r s^{d-1}\cos^2(ks)\left(\frac{\partial \chi}{\partial s}\right)^2\, ds\tag{$I_{3,k}$}\label{Itrois}.
\end{align*}

Since $\sin^2(k\cdot)$ converges weakly to $\frac12$ {in $L^2(0,r)$}, since $\chi(0)=1$ and $\Vert \chi\Vert_{\mathscr C^1}\leq M$ for some $M>0$, it follows that 
$$
I_{1,k}\underset{k\to +\infty}\sim k^2 C_0, C_0>0\quad\text{and}\quad  I_{2,k}=\underset{k\to \infty}{\operatorname{o}}(I_{1,k}).
$$
Finally, \eqref{Itrois} remains bounded and we get
$$\int_{\B(x_0;r)}|\n \psi_k|^2\underset{k\to \infty}\sim k^2C_0$$ for some constant $C_0>0$, which concludes the proof.

\begin{remark}
In this approach which, as we underline, {works under the strong hypothesis that $\tilde \O$ has a non-empty interior}, the core point is to build a sequence of admissible perturbations $\{h_k\}_{k\in \N}$ such that the family $\mathscr H=\{h_k\}_{k\in \N}$ is uniformly bounded in $W^{-2,2}$ but not in $W^{-1,2}$; this guarantees the blow-up of the $W^{1,2}$-norm and the boundedness of the $L^2$-norm of the associated G\^ateaux-derivatives $\dot \theta_{m,\mu}[h_k]$. In the proof of Theorem \ref{Th:BangBang}, the perturbation $h$ that we construct has a fixed $L^2$ norm, and hence the sequence of G\^ateaux-derivatives is uniformly bounded in $W^{2,2}(\O)$.
\end{remark}

\subsection{Proof of Theorem \ref{Th:Jcroi}}\label{secproofTh:Jcroi}

The proof of Theorem \ref{Th:Jcroi} follows the same lines as {the one of }Theorem~\ref{Th:BangBang}. For this reason, we only indicate {hereafter} the main steps, and point to the principal differences.

Following the same {methodology for stating the first order optimality conditions for problem~\eqref{Eq:Pv}}, let us introduce
the adjoint state $p_{j,m,\mu}$ solution of
\begin{equation}\label{Eq:AdjointJ}
\left\{\begin{array}{ll}
\mu \Delta p_{j,m,\mu}+p_{j,m,\mu}(m-2\tmm)=-j'(\tmm) & \text{ in }\O, 
\\ \frac{\partial p_{j,m,\mu}}{\partial \nu}=0 & \text{ on }\partial \O.
\end{array}\right.
\end{equation}
Since $j'>0$, a direct adaptation of Lemma~\ref{Cl:InfP} yields
\begin{equation}\label{Eq:InfPj}
\forall m\in \mathcal M(\O),\quad  \inf_\O p_{j,m,\mu}>0.\end{equation}

It is straightforward to see that the G\^ateaux derivative of the functional $\mathcal J_j$ writes
\begin{equation}
\dot{\mathcal J_j}(m)[h]=\int_\O h \tmm p_{j,m,\mu},
\end{equation}
{for every $m\in \mathcal M(\O)$ and any admissible perturbation $h$ at $m$.}

{Let us} compute the second order G\^ateaux derivative of $\mathcal J_j$. Keeping track of the fact that
$\ddot{\theta}_{m,\mu}$ solves \eqref{LDEdotdot} and that
by direct computation, we obtain
\begin{equation}
\ddot{\mathcal J_j}(m)[h,h]=\int_\O \left(\dot\theta_{m,\mu}^2 j''(\tmm)+\ddot\theta_{m,\mu} j'(\tmm)\right),\end{equation} 
we get an expression analogous to \eqref{Eq:DSD}. Indeed, multiplying \eqref{Eq:AdjointJ} by $\ddot{\theta}_{m,\mu}$ and integrating by parts {yields}
\begin{align*}
\frac12\int_\O \ddot{\theta}_{m,\mu} j'(\tmm)&=\int_\O \left(h \dtmm-\dot\theta^2_{m,\mu}\right)p_{j,m,\mu}
\\&=-\int_\O \dot\theta^2_{m,\mu}p_{j,m,\mu}+\int_\O \left(\frac{-\mu \Delta \dtmm-\dtmm(m-2\tmm)}{\tmm}\right)p_{j,m,\mu}.\end{align*} 
{Let us introduce} 
\begin{equation}
u_{j,m,\mu}:=\frac{p_{m,j,\mu}}{\tmm}.
\end{equation} 
{Notice that, using} the same arguments {as in the proof of Theorem~\ref{Th:BangBang},} we obtain 
\begin{equation}\label{Eq:InfJ}
\inf_\O u_{j,m,\mu}>0,\qquad  \Delta u_{j,m,\mu}\in L^\infty(\O).
\end{equation} 
{Since} $j$ {belongs to }$\mathscr C^2$, {there exists} $M_j>0$ {such that} 
\begin{equation}
\Vert j''(\tmm)\Vert_{L^\infty}\leq M_j.
\end{equation}
We {thus} obtain the existence of a potential $V_{j,m,\mu}\in L^\infty(\O)$ such that 
\begin{equation}\label{Eq:DSDj}
\ddot {\mathcal J_j}(m)[h,h]=\mu\int_\O u_{j,m,\mu}\left|\n \dtmm\right|^2-\int_\O V_{j,m,\mu}\dtmm^2.
\end{equation}
As a consequence, by \eqref{Eq:InfJ} and by the fact that $V_{j,m,\mu}\in L^\infty(\O)$, it suffices to find a perturbation $h$ such that, for a large enough parameter $M_0>0$, 
\begin{equation}\int_\O |\n \dtmm|^2\geq M_0\int_\O \dtmm^2.\end{equation}We are now back to proving \eqref{Eq:Fmot}, and the proof reads the same {way}.

\section{Proof of Theorem~\ref{Th:FragND}}
The core of this proof relies on {fine} energy estimates.
{In what follows, it will be convenient to introduce the set of bang-bang functions 
$$
\mathbb M(\O):=\{m\in \mathcal M(\O), \ \exists E\subset \O\mid m=\mathds 1_E\}.
$$ 
}
To alleviate the reading, let us start with the presentation of the proof structure.

\paragraph{Main idea}
The proof {rests upon the} use of two ingredients:
\begin{itemize}
\item[(i)] the first one {reads}
\begin{lemma}[{\cite[Lemma 2]{MRBSIAP}}]\label{Le:MBR}
There exists $\delta>0$ such that 
\begin{equation}
\underset{\mu \to 0^+}{\lim\inf}\left(\sup_{m\in \mathcal M(\O)}F_\mu(m)\right)\geq m_0+ \delta>0.
\end{equation}
\end{lemma}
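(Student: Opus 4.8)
The plan to prove Lemma~\ref{Le:MBR} is the following. The starting point is the exact identity obtained by dividing \eqref{LDE} by $\tmm$, integrating over $\O$ and integrating by parts, the Neumann condition making the boundary term vanish and $\inf_\O\tmm>0$ legitimising the manipulation: for every $m\in\mathcal M(\O)$,
\[
F_\mu(m)=\int_\O\tmm=m_0+\mu\int_\O\frac{|\n\tmm|^2}{\tmm^2}=m_0+\mu\int_\O|\n\log\tmm|^2\geq m_0 .
\]
Thus it suffices to construct, for every sufficiently small $\mu>0$, a single competitor $m_\mu\in\mathcal M(\O)$ for which $\mu\int_\O|\n\log\theta_{m_\mu,\mu}|^2$ is bounded away from $0$ uniformly in $\mu$. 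The guiding heuristic is that a \emph{fixed} $m$ gives $\tmm\to m$ in $L^1$ and a vanishing right-hand side --- this is exactly \eqref{Eq:Lou} --- so the resource distribution must be allowed to oscillate at the diffusive length scale $\sqrt\mu$.

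Accordingly, I would fix once and for all a non-constant $\Z^d$-periodic profile $m^\sharp:\R^d\to[0,1]$ with $\int_{(0,1)^d}m^\sharp=m_0$ (possible since $0<m_0<|\O|=1$; one may even take $m^\sharp$ smooth), and introduce the associated cell solution $\Theta^\sharp>0$, the unique periodic solution of $\Delta\Theta^\sharp+\Theta^\sharp(m^\sharp-\Theta^\sharp)=0$ on the unit torus --- its existence, uniqueness, positivity and the bound $\inf\Theta^\sharp>0$ being obtained exactly as for \eqref{LDE}. Running the identity above on the torus yields $\int_{(0,1)^d}\Theta^\sharp=m_0+\int_{(0,1)^d}|\n\log\Theta^\sharp|^2$; since $m^\sharp$ is non-constant, $\Theta^\sharp$ cannot be constant (otherwise the equation would force $m^\sharp$ to be constant), so that $\delta:=\int_{(0,1)^d}|\n\log\Theta^\sharp|^2>0$. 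For small $\mu$ I would then set $n:=\lceil\mu^{-1/2}\rceil$, $\varepsilon_\mu:=1/n$ (hence $\mu/\varepsilon_\mu^2\to1$), and take the competitor $m_\mu(x):=m^\sharp(n x)$, which lies in $\mathcal M(\O)$ and, thanks to the exact periodic tiling of $\O=(0,1)^d$, satisfies $\int_\O m_\mu=m_0$.

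It then remains to pass to the limit and show $F_\mu(m_\mu)\to\int_{(0,1)^d}\Theta^\sharp=m_0+\delta$. After the change of variables $x=\varepsilon_\mu y$, the rescaled state $y\mapsto\theta_{m_\mu,\mu}(\varepsilon_\mu y)$ solves $\alpha_\mu\Delta_y\Theta+\Theta(m^\sharp(y)-\Theta)=0$ with $\alpha_\mu=\mu/\varepsilon_\mu^2\to1$, on the expanding cube $(0,n)^d$ with Neumann conditions. Using the uniform bounds $0<\theta_{m_\mu,\mu}\leq 1$ (comparison with the supersolution $1$) together with a uniform lower bound $\inf_\O\theta_{m_\mu,\mu}\geq c_0>0$ (from a constant, or cell-based, subsolution), a two-scale/homogenisation argument gives $\theta_{m_\mu,\mu}(\cdot)-\Theta^\sharp(\cdot/\varepsilon_\mu)\to 0$ in $L^2(\O)$, the boundary layer near $\partial\O$ having measure $O(\varepsilon_\mu)$ and hence being negligible. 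Periodic averaging then produces the claimed limit, and since $\sup_{m\in\mathcal M(\O)}F_\mu(m)\geq F_\mu(m_\mu)$, one concludes $\liminf_{\mu\to0^+}\sup_{m\in\mathcal M(\O)}F_\mu(m)\geq m_0+\delta/2>m_0$ for $\mu$ small.

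I expect the homogenisation step to be the only genuine difficulty: the equation is semilinear and $m_\mu$ is merely bounded, so some care is required both for the uniform positivity of $\theta_{m_\mu,\mu}$ and for the thin boundary layer on $\partial\O$, where the periodic profile fails the Neumann condition. A more hands-on substitute that bypasses homogenisation theory is to assemble, from damped and boundary-corrected copies of $\Theta^\sharp(\cdot/\varepsilon_\mu)$, an honest subsolution of \eqref{LDE} with $m=m_\mu$ having $\partial_\nu\leq 0$ on $\partial\O$; the maximum principle then gives $\theta_{m_\mu,\mu}\geq(1-\operatorname{o}(1))\Theta^\sharp(\cdot/\varepsilon_\mu)$ outside a boundary layer, which already yields $\liminf_{\mu\to0^+}F_\mu(m_\mu)\geq m_0+\delta$. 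In dimension one this subsolution is completely explicit, which is doubtless why the result is cleanest there and treated separately.
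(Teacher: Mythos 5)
First, a point of reference: the paper does not prove Lemma~\ref{Le:MBR} at all --- it is imported verbatim from \cite[Lemma 2]{MRBSIAP} --- so there is no internal proof to compare against. Judged on its own terms, your strategy is the right one and is essentially the mechanism behind the quoted result: the identity $F_\mu(m)=m_0+\mu\int_\O|\n\log\tmm|^2$ is correct (divide \eqref{LDE} by $\tmm>0$, integrate by parts), the positivity of $\delta=\int_{(0,1)^d}|\n\log\Theta^\sharp|^2$ for a non-constant periodic profile is correct, and $m_\mu=m^\sharp(n\cdot)$ with $n=\lceil\mu^{-1/2}\rceil$ is indeed admissible by exact tiling.

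The one step that is asserted rather than proved is the limit $F_\mu(m_\mu)\to\int_{(0,1)^d}\Theta^\sharp$. As you yourself flag, this is not off-the-shelf homogenisation: the diffusivity and the oscillation scale are coupled critically ($\mu/\e_\mu^2\to1$), so the limit object is the cell solution itself rather than a homogenised problem, and both the semilinearity and the Neumann mismatch on $\partial\O$ require an actual argument; your claim that the boundary layer ``has measure $O(\e_\mu)$ and hence is negligible'' is itself something to prove for the \emph{solution}, not just for the ansatz. As written this is a genuine gap, but it closes with almost no work if you exploit symmetry: choose $m^\sharp$ $1$-periodic, even in each variable separately, and bounded below by $m_0/2$. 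By uniqueness the cell solution $\Theta^\sharp_\alpha$ of $\alpha\Delta\Theta+\Theta(m^\sharp-\Theta)=0$ inherits this symmetry, so $\partial_{y_i}\Theta^\sharp_\alpha=0$ on $\{y_i\in\Z\}$; hence $\Theta^\sharp_{\alpha_\mu}(n\,\cdot)$, with $\alpha_\mu=\mu n^2\to1$, is an \emph{exact} positive solution of \eqref{LDE} for $m=m_\mu$ satisfying the Neumann condition on $\partial(0,1)^d$, and by uniqueness it equals $\theta_{m_\mu,\mu}$. Then $F_\mu(m_\mu)=\int_{(0,1)^d}\Theta^\sharp_{\alpha_\mu}=m_0+\alpha_\mu\int_{(0,1)^d}|\n\log\Theta^\sharp_{\alpha_\mu}|^2\to m_0+\delta$ reduces to continuous dependence of the cell problem on $\alpha$ near $1$ (uniform elliptic estimates plus uniqueness), and no two-scale analysis, boundary layer, or subsolution construction is needed. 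Without some such device, the convergence step you invoke would have to be supplied in full.
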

\item[(ii)] the second one, on which the emphasis will be put throughout the proof, {is} an estimate of the following form: there exist a constant $M>0$ and two exponents $\alpha, \beta>0$ such that
\begin{equation}\label{Eq:EstimBV}
\forall m\in \mathcal M(\O)\cap BV(\O),\quad  \Vert \tmm-m\Vert_{L^1(\O)}^\beta\leq M \Vert m\Vert_{BV(\O)} \mu^\alpha.
\end{equation} 
\end{itemize}
If Estimate \eqref{Eq:EstimBV} holds, then, assuming that the optimiser $m_\mu^*$ is a $BV(\O)$-function (if it is not, then $\Vert m\Vert_{BV(\O)}=+\infty$ and the statement of the theorem is trivial) then we have 
\begin{equation}\label{m1636}
\mu^\alpha \Vert m_\mu^*\Vert_{BV(\O)}\geq \frac1M\Vert \tmm-m\Vert_{L^1(\O)}^\beta\geq \frac1M \left| \int_\O \tmm-m_0\right|^\beta\geq \frac{\delta^\beta}M,
\end{equation}
{according to Lemma~\ref{Le:MBR},} yielding that 
\begin{equation}
\Vert m_\mu^*\Vert_{BV(\O)}\geq \frac{M'}{\mu^\alpha},
\end{equation}
{with $M'=\frac{\delta^\beta}M$.}
To obtain convergence rates such as \eqref{Eq:EstimBV}, we will proceed using energy arguments and prove that a rescaled, shifted version of the natural energy associated with the PDE \eqref{LDE} yields this kind of control.

\paragraph{The rescaled energy functional}
Let us first recall that the equation \eqref{LDE} admits a variational formulation: {let us} introduce 
$$
\mathcal E_{m,\mu}:W^{1,2}(\O)\ni \theta\mapsto \frac{\mu}2\int_\O |\n \theta|^2+\frac13\int_\O \theta^3-\frac12\int_\O m\theta^2,
$$
then $\tmm$ is characterized as the unique minimiser of $\mathcal E_{\mu}$ over $W^{1,2}(\O)${; in other words}
\begin{equation}\label{Eq:VF}
\mathcal E_{m,\mu}(\tmm)=\inf_{\substack{u\in W^{1,2}(\O)\\ u\geq 0}}\mathcal E_{m,\mu}(u).
\end{equation}

Since we could not locate this formulation in the literature, we give a proof:

\begin{lemma}\label{Le:VF} $\tmm$ is the unique minimiser of 
\begin{equation}\mathcal E_{m,\mu}:u\mapsto \frac\mu2\int_\O |\n u|^2-\frac12\int_\O mu^2+\frac13\int_\O u^3\end{equation}
{over} the set $\mathscr K:=\{u\in W^{1,2}(\O), u\geq 0\text{ in }\O\}$.
\end{lemma}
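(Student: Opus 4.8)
The plan is to run the direct method in the calculus of variations to produce a minimiser of $\mathcal E_{m,\mu}$ over $\mathscr K$, then identify it with $\tmm$ via the Euler--Lagrange equation, and finally argue uniqueness from strict convexity on the relevant cone. First I would check coercivity: since $0\le m\le 1$, one has $-\frac12\int_\O mu^2\ge -\frac12\int_\O u^2$, and the cubic term $\frac13\int_\O u^3\ge 0$ on $\mathscr K$ dominates the quadratic term for large values of $u$; combined with the gradient term $\frac\mu2\int_\O|\n u|^2$ this gives $\mathcal E_{m,\mu}(u)\to+\infty$ as $\Vert u\Vert_{W^{1,2}(\O)}\to\infty$ along $\mathscr K$ (one can split $\int_\O u^2$ according to whether $u$ is large or small and absorb the bad part into $\int_\O u^3$ using Young's inequality). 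Hence a minimising sequence $(u_n)$ is bounded in $W^{1,2}(\O)$; extract a subsequence converging weakly in $W^{1,2}(\O)$, strongly in $L^2(\O)$ (Rellich) and a.e.\ to some $u^*$. The set $\mathscr K$ is convex and closed, hence weakly closed, so $u^*\in\mathscr K$. The Dirichlet term is weakly lower semicontinuous, the terms $-\frac12\int_\O mu^2$ and $\frac13\int_\O u^3$ pass to the limit by strong $L^2$ and $L^3$ convergence (the latter from the $W^{1,2}\hookrightarrow L^6$ embedding in low dimension, or directly from a.e.\ convergence plus uniform integrability), so $\mathcal E_{m,\mu}(u^*)\le\liminf\mathcal E_{m,\mu}(u_n)$ and $u^*$ is a minimiser.

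Next I would derive the Euler--Lagrange equation. For $u^*$ a minimiser in $\mathscr K$ and any $\varphi\in W^{1,2}(\O)$, the admissible variations are $u^*+t\varphi$ for $t\ge 0$ when $u^*$ touches the constraint, but one shows $u^*>0$ a.e.: indeed $u^*\not\equiv 0$ (since $\mathcal E_{m,\mu}$ takes negative values on small multiples of a suitable nonnegative test function whenever $m\not\equiv 0$, so the minimum is $<0=\mathcal E_{m,\mu}(0)$), and then the weak inequality $\mu\Delta u^*+u^*(m-u^*)\le 0$ obtained from one-sided variations together with the strong maximum principle (applied to $u^*$, which is $W^{2,p}$ by elliptic regularity) forces $\inf_\O u^*>0$; once in the interior of $\mathscr K$ in this sense, two-sided variations give
\begin{equation}
\mu\int_\O \n u^*\cdot\n\varphi-\int_\O m u^*\varphi+\int_\O (u^*)^2\varphi=0\qquad\text{for all }\varphi\in W^{1,2}(\O),
\end{equation}
which is precisely the weak formulation of \eqref{LDE}. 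By the uniqueness of the positive solution of \eqref{LDE} (recalled in the excerpt, \cite{BHR,CantrellCosner1,MR1105497}) we get $u^*=\tmm$.

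For uniqueness of the minimiser I would argue directly rather than via \eqref{LDE}: on the cone $\mathscr K$ the functional $\mathcal E_{m,\mu}$ is strictly convex \emph{along segments joining two points both bounded away from $0$}, since $t\mapsto \frac13 t^3-\frac12 m t^2$ has strictly positive second derivative $2t-m>0$ once $t>\frac12\ge\frac m2$; combined with the convexity (not strict) of the Dirichlet term this yields strict convexity on the set $\{u\in\mathscr K:\inf_\O u>0\}$, and since any minimiser lies in this set by the argument above, two distinct minimisers would contradict the strict inequality at their midpoint. The main obstacle I anticipate is the lower bound $u^*\ge\tfrac12$ (or, more precisely, the localisation of the sign argument needed to get strict convexity): the cubic term is only convex, not strictly convex, near $0$, so one genuinely needs the a priori positivity/lower bound on minimisers before strict convexity can be invoked; alternatively, one sidesteps this entirely by noting that \emph{any} critical point in $\mathscr K$ solves \eqref{LDE} and then quoting the known uniqueness for \eqref{LDE}, which is the cleaner route and the one I would ultimately write down.
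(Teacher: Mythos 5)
Your proposal is correct and, in its final form, follows essentially the same route as the paper: direct method (coercivity from the cubic term dominating the quadratic one), weak lower semicontinuity, nontriviality of the minimiser because $\mathcal E_{m,\mu}$ takes negative values, identification via the Euler--Lagrange equation, and uniqueness inherited from the known uniqueness of positive solutions of \eqref{LDE}. The one genuine methodological difference is how the constraint $u\geq 0$ is handled: you work on the cone $\mathscr K$ directly, which forces you through a variational inequality, a one-sided Euler--Lagrange condition and the strong maximum principle before you can take two-sided variations; the paper instead replaces $\frac13\int_\O u^3$ by $\frac13\int_\O |u|^3$ and minimises the resulting functional $\mathscr F_{m,\mu}$ over all of $W^{1,2}(\O)$, using $\mathscr F_{m,\mu}(u)=\mathcal E_{m,\mu}(|u|)$ to transfer back to $\mathscr K$ -- this sidesteps the variational inequality entirely and lets one write the unconstrained Euler--Lagrange equation at once. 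You were also right to distrust your strict-convexity argument for uniqueness: $2t-m>0$ requires $t>m/2$ pointwise, and no a priori lower bound of the form $u^*\geq \frac12$ is available (indeed $\inf_\O\tmm$ can be small), so that route does not close; the fallback you chose -- any nonnegative, nontrivial critical point solves \eqref{LDE}, whose positive solution is unique -- is exactly the paper's argument and is the one to write down.
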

{For the sake of completeness, this lemma is proved in Appendix~\ref{proofLem:VF} }

{Let us} introduce 
\begin{equation}\tilde{\mathcal E}_{m,\mu}:\{u\in W^{1,2}(\O), u\geq 0\}\ni \theta \mapsto \mathcal E_{m,\mu}(\theta)+\frac16\int_\O m^3.\end{equation}

\begin{remark}
The definition of $\tilde{\mathcal E}_{m,\mu}$ is justified by the following, formal computation: let us assume that $m$ is a $\mathscr C^1$ function. It is known \cite{LouInfluence} that $\tmm\underset{\mu \to 0^+}\rightarrow m$ in $L^p(\O)$, for $p\in [1;+\infty)$. Since {we aim at obtaining} a convergence rate for $\Vert \tmm-m\Vert_{L^1(\O)}$ as $\mu \to 0^+$, it is natural to consider the energy $\mathcal E_{m,\mu}(m)$. Explicit computations show that 
$$\mathcal E_{m,\mu}(m)=\frac{\mu}2\int_\O |\n m|^2-\frac16\int_\O m^3\underset{\mu \to 0}\rightarrow -\frac16\int_\O m^3,$$ {which justifies to} consider the energy $\tilde{\mathcal E}_{m,\mu}$. 
\end{remark}

\paragraph{Estimating $\Vert \tmm-m\Vert_{L^1(\O)}$ using $\tilde{\mathcal E}_{m,\mu}$.}
The  key point is then to prove that $\Vert \tmm-m\Vert_{L^1(\O)}$ {can be estimated in terms of the} rescaled energy, \emph{via} the following two Lemmas.

\begin{lemma}\label{Le:L1}
There exists a constant $M_1>0$ such that 
\begin{equation}
\forall m\in \mathcal M(\O), \quad \Vert \tmm-m\Vert_{L^1(\O)}\leq M_1\tilde{\mathcal E}_{m,\mu}(\tmm)^{\frac13}= M_1\left(\inf_{u\in W^{1,2}(\O), u\geq 0}\tilde{\mathcal E}_{m,\mu}(u)\right)^{\frac13}.
\end{equation}

\end{lemma}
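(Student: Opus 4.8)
The plan is to bound $\Vert\tmm - m\Vert_{L^1(\O)}$ pointwise in terms of a nonnegative integrand that is controlled by the shifted energy $\tilde{\mathcal E}_{m,\mu}(\tmm)$. First I would exploit the fact that $\tilde{\mathcal E}_{m,\mu}$ was engineered precisely so that its integrand is a perfect-square-like quantity: observe that
\begin{equation}
\tilde{\mathcal E}_{m,\mu}(u)=\frac{\mu}{2}\int_\O|\n u|^2+\int_\O\left(\frac13 u^3-\frac12 m u^2+\frac16 m^3\right).
\end{equation}
The key algebraic observation is that the cubic polynomial $P_m(u):=\frac13 u^3-\frac12 m u^2+\frac16 m^3$ factorizes (for fixed $m\geq 0$, as a function of $u\geq 0$) as $P_m(u)=\frac{1}{6}(u-m)^2(2u+m)$. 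Indeed $P_m(m)=0$, $P_m'(u)=u^2-mu=u(u-m)$ so $P_m'(m)=0$, and matching the leading coefficient $\frac13$ gives the factor $2u+m$ up to the $\frac16$. Hence $P_m(u)\geq 0$ for all $u,m\geq 0$, so the integrand of $\tilde{\mathcal E}_{m,\mu}$ is pointwise nonnegative, and moreover $P_m(u)\geq \frac{m}{6}(u-m)^2$ and $P_m(u)\geq \frac13(u-m)^2$ when $u\geq m$; more usefully $2u+m\geq u$ gives $P_m(u)\geq \frac16 u(u-m)^2\geq\frac16 (u-m)^3$ when $u\geq 2m$, but the cleanest route is a straightforward interpolation.

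Next I would estimate $\Vert\tmm-m\Vert_{L^1(\O)}$ via this. Write $|\tmm-m|^3 \leq C\,(2\tmm+m)(\tmm-m)^2 + C'$ on the region where $2\tmm+m$ is bounded below, and handle the small-$(2\tmm+m)$ region separately --- but since $m\geq 0$ and $\tmm\geq 0$, on $\{2\tmm+m\leq 1\}$ we have $|\tmm-m|\leq 1$ and $|\tmm-m|^3\leq |\tmm-m|^2\leq (2\tmm+m)(\tmm-m)^2$ fails, so instead I split using Hölder: $\int_\O|\tmm-m| = \int_\O |\tmm-m|^{2/3}\cdot|\tmm-m|^{1/3}$, but that doesn't immediately pair with $P_m$ either. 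The cleanest argument: by Hölder's inequality with exponents $3$ and $3/2$,
\begin{equation}
\int_\O |\tmm-m| \leq \left(\int_\O |\tmm-m|^2(2\tmm+m)\right)^{1/3}\left(\int_\O (2\tmm+m)^{-1/2}\right)^{2/3},
\end{equation}
and the second factor is finite and bounded uniformly in $m,\mu$ because $\tmm\geq \inf_\O\tmm$, but that infimum degenerates as $\mu\to 0$, so instead I use $\int_\O|\tmm-m|\leq \left(6\int_\O P_m(\tmm)\right)^{1/3}|\O|^{2/3}$ via Hölder with the \emph{constant} weight after noting $(u-m)^2 (2u+m)\geq$ nothing uniform --- the correct resolution is: since $2u+m\geq 2u$ and $2u+m\geq |u-m|$ when... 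Actually the bound $P_m(u)=\frac16(u-m)^2(2u+m)\geq \frac{1}{6}|u-m|^3$ holds whenever $2u+m\geq |u-m|$, i.e. whenever $u\geq 0$ and $m\geq 0$ with $u\geq 0$: if $u\geq m$ then $2u+m\geq u \geq u-m$; if $u<m$ then $2u+m\geq m\geq m-u$. So $P_m(u)\geq \frac16|u-m|^3$ always, hence $\int_\O|\tmm-m|^3\leq 6\int_\O P_m(\tmm)\leq 6\tilde{\mathcal E}_{m,\mu}(\tmm)$, and by Hölder $\Vert\tmm-m\Vert_{L^1(\O)}\leq |\O|^{2/3}\Vert\tmm-m\Vert_{L^3(\O)}\leq |\O|^{2/3}(6\tilde{\mathcal E}_{m,\mu}(\tmm))^{1/3}$, which is exactly the claimed inequality with $M_1=(6|\O|^2)^{1/3}$.

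Finally, the identity $\tilde{\mathcal E}_{m,\mu}(\tmm)=\inf_{u\in W^{1,2}(\O),\,u\geq 0}\tilde{\mathcal E}_{m,\mu}(u)$ is immediate from Lemma~\ref{Le:VF}, since $\tilde{\mathcal E}_{m,\mu}$ differs from $\mathcal E_{m,\mu}$ only by the additive constant $\frac16\int_\O m^3$, which does not affect the minimizer or the infimum structure. The only genuine content is thus the factorization of the cubic and the elementary inequality $\frac13 u^3-\frac12 mu^2+\frac16 m^3\geq \frac16|u-m|^3$ for $u,m\geq 0$, after which Hölder closes the estimate; I expect no real obstacle here, the main point being to get the algebra of $P_m$ right and to record that the lower bound $2u+m\geq |u-m|$ is valid for all nonnegative $u,m$.
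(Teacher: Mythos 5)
Your proof is correct, and it shares the paper's first step --- the algebraic identity $\frac13u^3-\frac12mu^2+\frac16m^3=\frac16(2u+m)(u-m)^2=\left(\frac{u}{3}+\frac{m}{6}\right)(u-m)^2$, which shows that the potential part of $\tilde{\mathcal E}_{m,\mu}$ is pointwise nonnegative and is dominated by $\tilde{\mathcal E}_{m,\mu}(\tmm)$ once the gradient term is dropped --- but your second step is genuinely different and in fact stronger. The paper passes from $\int_\O\left(\frac{\tmm}{3}+\frac{m}{6}\right)(\tmm-m)^2$ to $\Vert\tmm-m\Vert_{L^1(\O)}$ by restricting to bang-bang densities $m=\mathds 1_E$ and splitting the integral over $\{m=0\}$ and $\{m=1\}$, applying Jensen-type inequalities on each piece; as written, that step only covers $m\in\mathbb M(\O)$, even though the lemma is stated (and later applied, e.g.\ to the mollifications $m_\e$) for general $m\in\mathcal M(\O)$. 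You instead observe the elementary inequality $2u+m\geq|u-m|$ for all $u,m\geq0$ (the two cases $u\geq m$ and $u<m$ being immediate), which gives the pointwise bound $\frac16(2u+m)(u-m)^2\geq\frac16|u-m|^3$ and hence $\Vert\tmm-m\Vert_{L^3(\O)}^3\leq 6\tilde{\mathcal E}_{m,\mu}(\tmm)$; a single H\"older inequality $\Vert\tmm-m\Vert_{L^1(\O)}\leq|\O|^{2/3}\Vert\tmm-m\Vert_{L^3(\O)}$ then closes the estimate with the explicit constant $M_1=(6|\O|^2)^{1/3}$, uniformly over all of $\mathcal M(\O)$ and with no case analysis. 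The identification of $\tilde{\mathcal E}_{m,\mu}(\tmm)$ with the infimum is, as you note, immediate from Lemma~\ref{Le:VF}, since the shift $\frac16\int_\O m^3$ does not depend on $u$. The only cosmetic point is that the exploratory false starts in your middle paragraph should be excised; the final argument is clean, self-contained, and arguably an improvement on the paper's own proof.
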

\begin{proof}[Proof of Lemma \ref{Le:L1}]
We {split the proof into two steps.}
\paragraph{Step 1.}
{\it There exists $M>0$ such that 
\begin{equation}
\forall \mu>0, \forall m\in \mathcal M(\O), \qquad \int_\O \left(\frac\tmm3+\frac{m}6\right)(\tmm-m)^2\leq\tilde{\mathcal E}_{m,\mu}(\tmm).
\end{equation}}
This follows from explicit computations. {Setting $A=\int_\O \left(\frac\tmm3+\frac{m}6\right)(\tmm-m)^2$, one has}
\begin{align*}
A &=\frac13\int_\O \tmm \left(\tmm^2-2m\tmm+m^2\right)+\frac16\int_\O m \left(\tmm^2-2m\tmm+m^2\right)
\\=&\frac13\int_\O \tmm^3-\frac23\int_\O m\tmm^2+\frac13\int_\O \tmm m^2+\frac16\int_\O m^3+\frac16\int_\O m\tmm^2-\frac13\int_\O \tmm m^2
\\=&\frac13\int_\O \tmm^3+\frac16\int_\O m^3-\frac12 \int_\O m\tmm^2=\mathcal E_\mu(\tmm)-\frac\mu2\int_\O |\n \tmm|^2+\frac16\int_\O m^3
\\\leq &\mathcal E_\mu(\tmm)+\frac16\int_\O m^3=\tilde{\mathcal E}_\mu(\tmm).
\end{align*}

\paragraph{Step 2.} 
{\it There exists $M_0>0$ such that for every $\mu>0$ and $m\in {\mathbb M}(\O)$, one has
$$
\Vert \tmm-m\Vert_{L^1(\O)}\leq M_0 \left(\int_\O \left(\frac\tmm3+\frac{m}6\right)(\tmm-m)^{2}\right)^{\frac13}.$$
}
According to the Jensen inequality, one has $\Vert \tmm-m\Vert_{L^1(\O)}\leq |\O|^{\frac12}  \left(\int_\O (\tmm-m)^2\right)^{\frac12}$. {Since $m\in {\mathbb M}(\O)$, one has}
\begin{align*}\int_\O \left(\frac\tmm3+\frac{m}6\right)(\tmm-m)^2&= \int_{\{m=0\}}\frac{\tmm^3}3+\int_{\{m=1\}}\frac16(\tmm-1)^2.
\end{align*}
According to the Jensen inequality, there exists a constant $C>0$ such that
\begin{equation}  \int_{\{m=0\}}\frac{\tmm^3}3\geq C \left( \int_{\{m=0\}}{\tmm^2}\right)^{\frac32}.\end{equation}
Furthermore, for any $N_0\in \R_+^*$, there exists $C_{N_0}>0$ such that 
\begin{equation*}
\forall x \in [0;N_0), \quad C_{N_0}x^{\frac32}\leq x.
\end{equation*}
Since {$0\leq |\tmm- m|\leq 2$} a.e. in $\O$, this last inequality yields the existence of $C'>0$ such that
\begin{equation}
\frac16\int_{\{m=1\}} (\tmm-m)^2\geq C' \left(\int_{\{m=1\}} (\tmm-m)^2\right)^{\frac32}
\end{equation}
As a consequence, there exists $C''>0$ such that 
\begin{align*}
\int_\O \left(\frac\tmm2+\frac{m}6\right)(\tmm-m)^2&\geq \int_{\{m=0\}}\frac{\tmm^3}2+\int_{\{m=1\}}{\frac16}(\tmm-m)^2
\\&\geq C'' \left(\int_{\{m=0\}}\tmm^2 \right)^{\frac32}+C''\left(\int_{\{m=1\}} (\tmm-m)^2\right)^{\frac32}
\\&\geq C'' \left(\int_\O (\tmm-m)^2\right)^{\frac32},\end{align*}
whence the conclusion.

{The lemma follows from a combination of the two steps.}  
\end{proof}

As a consequence, we will aim at proving an estimate of the form 
\begin{equation}\label{Eq:kK}
\tilde{\mathcal E}_{m,\mu}(\tmm)\leq \Vert m\Vert_{BV(\O)}\mu^\beta 
\end{equation} 
which, with {Lemma~\ref{Le:L1}, will lead} to an estimate of the type \eqref{Eq:EstimBV}. As explained in the introduction, we provide in Section~\ref{sec:proof1DModica} an alternative proof of \eqref{Eq:kK} in the one-dimensional case following the method of Modica \cite{ModicaMortola}, that cannot unfortunately be straightforwardly extended to higher dimensions.

Let us now concentrate on the  multidimensional case, assuming that $\O=(0;1)^d$ with $d\geq 1$. We aim at obtaining an estimate of the form \eqref{Eq:EstimBV} which, by Lemma \ref{Le:L1} amounts to determine a bound on $\tilde{\mathcal E}_{m,\mu}(\tmm)$. {Let us first consider $m\in \mathcal M(\O)\cap W^{1,2}(\O)$, that we will} use as a test function in the energy. We get 
\begin{equation}\label{Eq:PL}
\tilde{\mathcal E}_{m,\mu}(\tmm)\leq \tilde{\mathcal E}_{m,\mu}(m)=\frac{\mu}2\int_\O |\n m|^2.\end{equation}

We now consider a convolution kernel defined {with the help of an approximation} of unity. Namely, we consider a $\mathscr C^\infty$  function $\chi$ with compact support in $\mathbb B(0;1)$ satisfying 
$$
0\leq \chi\leq 1\text{ a.e. in }\mathbb B(0;1),\qquad  \int_{\mathbb B(0;1)}\chi=1.
$$ 
For every $\e>0$, we define
\begin{equation}
\chi_\e(x):=\frac1{\e^d}\chi\left(\frac{x}\e\right).
\end{equation} 
{Every $m\in \mathcal M(\O)$ is extended outside of $\O$ by a compactly supported function of bounded variation, according to \cite[Proposition 3.21]{AmbrosioFuscoPallara}.} We define 
\begin{equation}
m_\e:=m\star \chi_\e
\end{equation}
 for every $\e>0$, {where $\star$ stands for the convolution product in $L^1(\R^d)$.}
{It is standard that} 
\begin{equation}
\forall p\in (1;+\infty), \qquad \Vert m-m_\e\Vert_{L^p(\O)}\underset{\e \to 0}\rightarrow 0.
\end{equation}
{According to \cite[Equation~(2.4)]{LouInfluence}}, there exists a constant $M$ such that for every $m,m'\in 
\{f\in L^\infty(\O), \ f\geq 0\}$, there holds 
\begin{equation}
\Vert \tmm-\theta_{m',\mu}\Vert_{L^1(\O)}\leq C\Vert m-m'\Vert_{L^1(\O)}^{\frac13}.\end{equation}
By the triangle inequality, for any $m\in \mathcal M(\O)\cap W^{1,2}(\O)$ and any $\mu,\e>0$,
\begin{align}\label{m2330}
\left|\int_\O \tmm-m_0\right|&\leq \Vert \tmm-\theta_{m_\e,\mu}\Vert_{L^1(\O)}+\Vert  \theta_{m_\e,\mu}-m_\e\Vert_{L^1(\O)}+\left|\int_\O m_\e-m_0\right|.
\end{align}
Note that one has, {for any $i\in \llbracket1,d\rrbracket$,} 
\begin{align*}
\Vert\partial_i m_\e\Vert_{L^2(\O)}^2&=\int_{\O}\left(\frac1{\e^{d+1}}\int_{\R^d}\partial_i \chi\left(\frac{x-y}\e\right)m(y)dy\right)^2\, dx
\\&\leq \frac{M}{\e^{2(d+1)}}\int_{\R^d}\int_\O |\n \chi|^2\left(\frac{x-y}\e\right) |m(y)|\, dydx\quad \text{ by Jensen's inequality}
\\&\leq \frac{M}{\e^{2}}\Vert \n \chi\Vert_{L^2(\R^d)}^2.
\end{align*}
Hence, there exists $M>0$ such that 
$$
\Vert \n m_\e\Vert_{L^2(\O)}\leq \frac{M}\e.
$$

As a consequence, by \eqref{Eq:PL} and Lemma \ref{Le:L1}, we have
\begin{equation}
\Vert \tmm-m\Vert_{L^1(\O)}\leq M_1 \mu^{\frac13}\Vert \n m\Vert_{L^2(\O)}^{\frac23}\leq M_1'\frac{\mu^{\frac13}}{\e^{\frac23}}.
\end{equation}
{It follows that there exists $M>0$} such that
\begin{align*}
\left|\int_\O \tmm-m_0\right|&\leq \Vert \tmm-\theta_{m_\e,\mu}\Vert_{L^1(\O)}+\Vert  \theta_{m_\e,\mu}-m_\e\Vert_{L^1(\O)}+\left|\int_\O m_\e-m_0\right|
\\&\leq M\left(  \Vert m-m_\e\Vert_{L^1(\O)}^{\frac13}+\frac{\mu^{\frac13}}{\e^{\frac23}}+\left|\int_\O m_\e-m_0\right|\right)
\end{align*}
{To end the proof, we need the following lemma, whose proof is postponed to the end of this section for the sake of clarity.}
\begin{lemma}\label{Eq:Tricky}
There exists a constant $d_0>0$ such that
\begin{equation}
\forall m\in \mathcal M(\O), 
\Vert m-m_\e\Vert_{L^1(\O)}\leq d_0 \e\Vert \n m\Vert_{L^1(\O)}.
\end{equation}
\end{lemma}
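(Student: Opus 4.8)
The plan is to prove Lemma~\ref{Eq:Tricky} via the standard "smoothing inequality" for $BV$ functions, namely that convolution with a mollifier moves a function a distance in $L^1$ controlled by the total variation times the mollification radius. First I would write, for a.e. $x\in\R^d$,
\begin{equation}
m(x)-m_\e(x)=\int_{\mathbb B(0;1)}\chi(z)\bigl(m(x)-m(x-\e z)\bigr)\,dz,
\end{equation}
using $\int\chi=1$ and the change of variables $y=x-\e z$. Then I would integrate $|m(x)-m_\e(x)|$ over $\O$ (or over $\R^d$, since $m$ has been extended to a compactly supported $BV$ function), use Fubini and the bound $0\le\chi\le1$ to get
\begin{equation}
\Vert m-m_\e\Vert_{L^1(\O)}\le \int_{\mathbb B(0;1)}\chi(z)\,\Vert m(\cdot)-m(\cdot-\e z)\Vert_{L^1(\R^d)}\,dz.
\end{equation}

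The key step is then the translation estimate: for $v\in BV(\R^d)$ and any $h\in\R^d$, $\Vert v(\cdot)-v(\cdot-h)\Vert_{L^1(\R^d)}\le |h|\,\Vert\n v\Vert_{L^1(\R^d)}$ (equivalently $|Dv|(\R^d)$). This is classical; one proves it first for $v\in \mathscr C^1_c$ by writing $v(x)-v(x-h)=\int_0^1 \n v(x-th)\cdot h\,dt$ and integrating, then extends to $BV$ by the density/approximation of $BV$ functions by smooth functions with convergence of the total variations (as in \cite{AmbrosioFuscoPallara}). Applying this with $h=\e z$, $|z|\le1$, gives $\Vert m(\cdot)-m(\cdot-\e z)\Vert_{L^1(\R^d)}\le \e\,\Vert\n m\Vert_{L^1(\R^d)}$, and since $\int_{\mathbb B(0;1)}\chi=1$ we conclude
\begin{equation}
\Vert m-m_\e\Vert_{L^1(\O)}\le \e\,\Vert\n m\Vert_{L^1(\R^d)}\le d_0\,\e\,\Vert\n m\Vert_{L^1(\O)},
\end{equation}
the last inequality absorbing into $d_0$ the (controlled) contribution of the $BV$-extension of $m$ to a neighbourhood of $\O$, which by \cite[Proposition 3.21]{AmbrosioFuscoPallara} adds only a constant multiple of $\Vert m\Vert_{BV(\O)}$, hence of $\Vert\n m\Vert_{L^1(\O)}$ plus lower order terms; if one wants a clean statement one replaces $\Vert\n m\Vert_{L^1(\O)}$ by $\Vert m\Vert_{BV(\O)}$ on the right-hand side, which suffices for the application.

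The main obstacle is purely a matter of bookkeeping at the boundary of $\O$: $m_\e$ is defined through the extension of $m$ to all of $\R^d$, so the translation estimate naturally produces $\Vert\n m\Vert_{L^1(\R^d)}=|Dm|(\R^d)$ for the extended function rather than $\Vert\n m\Vert_{L^1(\O)}$. One must therefore invoke the $BV$-extension result to guarantee that $|Dm|(\R^d)\le C\Vert m\Vert_{BV(\O)}$ with $C$ depending only on $\O=(0;1)^d$, and observe that this is harmless since in the end we only use $\Vert m\Vert_{BV(\O)}$ on the right-hand side of \eqref{Eq:EstimBV}. No delicate estimate is needed here; the only care required is to keep the constants independent of $m$ and of $\e$, which the argument above does.
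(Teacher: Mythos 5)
Your proof is correct and follows essentially the same route as the paper: both reduce $\Vert m-m_\e\Vert_{L^1}$ to the translation estimate $\Vert \tau_h m-m\Vert_{L^1(\R^d)}\leq |h|\,\Vert \n m\Vert_{L^1(\R^d)}$, prove that estimate for $\mathscr C^1$ functions by the fundamental theorem of calculus, and extend to $BV$ by density. The boundary bookkeeping you flag (the translation estimate naturally yields $|Dm|(\R^d)$ for the extension rather than a quantity intrinsic to $\O$) is exactly what the paper handles, immediately after the lemma, by invoking the boundedness of the $BV$-extension operator.
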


As a consequence, we have $\Vert m-m_\e\Vert_{L^1(\O)}{\lesssim} \e |m|_{BV(\R^d)}$. Since the extension operator is bounded, we thus get 
$\Vert m-m_\e\Vert_{L^1(\O)}\leq M \e\Vert m\Vert_{BV(\O)}$.
Plugging all these estimates {together and using \eqref{m2330}}, we finally obtain
\begin{align*}
\left|\int_\O \tmm-m_0\right|
&\leq M \left(\e \Vert m\Vert_{BV}\right)^{\frac13}+\frac{\mu^{\frac13}}{\e^{\frac23}}+\e\Vert \n m\Vert_{L^1(\O)}
\\&\leq  M \left(\e \Vert m\Vert_{BV}\right)^{\frac13}+\e\Vert \n m\Vert_{L^1(\O)}+M\frac{\mu^{\frac13}}{\e^{\frac23}}.
\end{align*}
{Taking
$\e=N_0\mu^{\frac12}$ with $N_0>0$, one gets
$$
\left|\int_\O \tmm-m_0\right|-\frac{M}{N_0^{2/3}}\leq \e \Vert m_\mu^*\Vert_{BV}^{\frac13}+ \e \Vert m_\mu^*\Vert_{BV},
$$
and} choosing $N_0$ large enough yields 
\begin{equation}\underset{\e\to 0}{\lim\inf} \left(\e \Vert m_\mu^*\Vert_{BV}\right)^{\frac13}+ \left(\e \Vert m_\mu^*\Vert_{BV}\right)>0.
\end{equation}  
{We infer the existence of} $C_0>0$ such that for all solution $m_\mu^*$ of \eqref{Eq:Pv}, one has
\begin{equation}
C_0\leq  \left(\e \Vert m_\mu^*\Vert_{BV}\right)^{\frac13}+ \left(\e \Vert m_\mu^*\Vert_{BV}\right)
\end{equation} 
and therefore, there exists $c_0>0$ such that, for every $\mu>0$ small enough
\begin{equation}
c_0\leq  \e \Vert m_\mu^*\Vert_{BV}={N_0}\mu^{\frac12}\Vert m_\mu^*\Vert_{BV(\O)}.
\end{equation}
The {desired} conclusion follows

\begin{proof}[Proof of Lemma~\ref{Eq:Tricky}]
As is customary in convolution, one has 
\begin{align*}
m(x)-\chi_\e\star m(x)&\lesssim \underset{|h|=\e}\sup\Vert \tau_h m-m\Vert_{L^1(\R^d)}
\end{align*}
for a.e. $x\in \O$, where $\tau_h$ stands for the translation operator. However, we claim that
\begin{equation}\label{Eq:Central}
\sup_{|h|=\e}\Vert \tau_h m-m\Vert_{L^1(\R^d)}\leq \Vert h\Vert_{L^\infty}{\Vert \n m\Vert_{L^1(\R^d)}}.
\end{equation}
{It suffices} to prove \eqref{Eq:Central} for $m\in \mathscr C^1$, and the general result follows from the density of $\mathscr C^1$ functions in $BV(\R^d)$. For any $h\in \R^d$ we have
\begin{align*}
\int_{\R^d}\left| m(x+h)-m(x)\right|dx&=\int_{\R^d}\left|\int_0^1 \frac{d}{d\xi}[m(x{+}\xi h)]\, d\xi\right|\, dx
=\int_{\R^d}\left|\int_0^1 \langle \n m(x{+}\xi h),h\rangle\right|\, dx
\\&\leq \Vert h\Vert_{\infty} \int_0^1\int_{\R^d} |\n m(x{+}\xi h)|d\xi \, dx
\\&=\Vert h\Vert_\infty \int_{\R^d}|\n m|=\Vert h\Vert_\infty \Vert \n m\Vert_{L^1}.
\end{align*}
{The desired result follows.}
\end{proof}

\section{Conclusion: possible extensions of the bang-bang property to other state equations}\label{Se:BC}We conclude this article with a discussion on possible generalisations of our method. Indeed, an interesting question is to know whether or not the methods put forth in the proof of Theorem \ref{Th:BangBang} could be applied to other types of boundary conditions, for instance Dirichlet or Robin, or for other {kinds of }non-linearities. We { justify below that} it is the case, and that the main difficulty lies in the well-posedness of the equation acting as a constraint on the optimisation problem \eqref{Eq:Pv}. 

Let us consider a boundary operator $\mathcal B$, that may be of Neumann ($\mathcal Bu=\frac{\partial u}{\partial \nu}$) or of Robin type ($\mathcal Bu=\frac{\partial u}{\partial \nu}+\beta u$ for some $\beta>0$). Let us consider a non-linearity $F=F(x,u)$ of class $\mathscr C^2$, and consider, for a given $m\in \mathcal M(\O)$, the solution $u_m$ of 
\begin{equation}\label{Eq:Concl}
\left\{\begin{array}{ll}
-\Delta u_m=mu_m+F(x,u_m) & \text{ in }\O,\\ 
\mathcal B u_m=0 & \text{ on }\partial \O.\end{array}\right.
\end{equation}

The first assumption on $F$ one has to make is:
\begin{equation}\tag{$\bold H$}\label{Eq:HConc}
\begin{array}{l}
\text{For any $m\in \mathcal M(\O)$, \eqref{Eq:Concl} has a unique positive solution $u_m$.}\\
\text{Furthermore, $\displaystyle \inf_{m\in \mathcal M(\O)}\inf_\O u_m>0, \sup_{m\in \mathcal M(\O)}\Vert u_m\Vert_{\mathscr C^{1}}<\infty.$}
\end{array}
\end{equation} 
It is notable that \eqref{Eq:HConc} is satsfied whenever $F$ satisfies:
\begin{enumerate} 
\item $F(x,0)=0$ and the steady state $z(\cdot)= 0$ is unstable.
\item Uniformly w.r.t. $x\in \O$, one has $\displaystyle \lim_{y\to \infty}F(x,y)/y=-\infty$.
\end{enumerate}

This is for instance problematic when considering Dirichlet boundary conditions for the logistic-diffusive equation: depending on the range of $\mu$ the equation may only have trivial solutions.

We also assume 
\begin{equation}\tag{$\bold H'$}\label{Eq:1c}
\text{The map $m\mapsto u_m$ is twice G\^ateaux-differentiable.}
\end{equation}
which is for instance ensured whenever $F\in W^{2,\infty}$ and if the solution $u=u_m$ is linearly stable.

Consider then the following {optimisation} problem, where the function $j$ satisfies \eqref{Hyp:J} 
\begin{equation}\label{Eq:PvConcl}\tag{$\bold P$}
\sup_{m\in \mathcal M(\O)}J(m),\qquad {\text{with }J(m)=\int_\O j(u_m)}.
\end{equation}
Our methods enable us to prove that any solution of \eqref{Eq:PvConcl} is a bang-bang function. To do so, we need to write down the first and second order G\^ateaux-derivative of $u_m$ with respect to $m$: {using the same notations as in the rest of this article,} if \eqref{Eq:HConc}-\eqref{Eq:1c} hold, then it can be shown that 
\begin{equation}
\left\{\begin{array}{ll}
-\Delta \dot u_m-m\dot u_m-\dot u_m \frac{\partial F}{\partial u}(x,u_m)=hu_m & \text{ in }\O, 
\\ \mathcal B\dot u_m=0 & \text{ on }\partial \O, 
\end{array}\right.
\end{equation} 
and 
\begin{equation} 
\left\{\begin{array}{ll}
-\Delta \ddot u_m-m\ddot u_m-\ddot u_m \frac{\partial F}{\partial u}(x,u_m)=2h\dot u_m+(\dot u_m)^2\frac{\partial^2F}{\partial u^2}(x,u_m) & \text{ in }\O, 
\\ \mathcal B\ddot u_m=0 & \text{ on }\partial \O, 
\end{array}\right. 
\end{equation}
{This allows us to compute the derivative of $J$. Under} \eqref{Eq:HConc}-\eqref{Eq:1c}, we have
\begin{equation}
\dot J(m)[h]=\int_\O \dot u_mj'(u_m)\quad \text{ and }\quad \ddot J(m)[h,h]=\int_\O \left(\dot u_m\right)^2j''(u_m)+\int_\O \ddot u_m j'(u_m).
\end{equation}
Let us introduce the adjoint state $p_m$, solving
\begin{equation}
\left\{\begin{array}{ll}
-\Delta p_m-mp_m-p_m \frac{\partial F}{\partial u}(x,u_m)=j'(u_m) & \text{ in }\O, 
\\ \mathcal Bp_m=0 & \text{ on }\partial \O.
\end{array}\right.\end{equation}  
Here, we need to make another assumption on $F$:
\begin{equation}\label{Eq:BL}\tag{$\bold H''$}
\text{ For any $m\in \mathcal M(\O)$, \quad $\inf_\O p_m>0$.}
\end{equation}
Given the assumption on $j$, \eqref{Eq:BL} is for instance implied if the first eigenvalue of $-\Delta-m-\frac{\partial F}{\partial u}(\cdot,u_m)$ is positive (linear stability condition), that also ensures the G\^ateaux differentiability of $m\mapsto u_m$. This in turn holds if $F(x,u)=-ug(x,u)$ with $g{\in W^{2,\infty}}$ non-decreasing. {Using the adjoint state to compute $\ddot J(m)[h,h]$ in a more tractable form}, one has 
\begin{align*}
\ddot J(m)[h,h]=&\int_\O (\dot u_m)^2j''(u_m)+\int_\O \ddot u_m j'(u_m)
\\=&\int_\O (\dot u_m)^2j''(u_m)+\int_\O p_m\left(2h\dot u_m+(\dot u_m)^2\frac{\partial^2F}{\partial u^2}(x,u_m)\right)
\\=&\int_\O (\dot u_m)^2\left(j''(u_m)+\frac{\partial^2 F}{\partial u^2}(x,u_m)\right)+2\int_\O \left(\frac{p_m}{u_m}\right)\dot u_m\left(-\Delta \dot u_m-m\dot u_m-\dot u_m \frac{\partial F}{\partial u}(x,u_m)\right)
\\=&\int_\O (\dot u_m)^2\left(j''(u_m)+\frac{\partial^2 F}{\partial u^2}(x,u_m)-m\frac{p_m}{u_m}-\frac{p_m}{u_m}\frac{\partial F}{\partial u}(x,u_m)\right)
\\&+2\int_\O \frac{p_m}{u_m}(-\dot u_m\Delta \dot u_m).
\end{align*}
Let us set $\Psi_m:=\frac{p_m}{u_m}$ and $V_m:=j''(u_m)+\frac{\partial^2 F}{\partial u^2}(x,u_m)-m\Psi_m-\Psi_m\frac{\partial F}{\partial u}(x,u_m)$. Then we obtain
\begin{align*}
\ddot J(m)[h,h]=&\int_\O (\dot u_m)^2V_m+2\int_\O\dot u_m\langle \n \Psi_m,\n\dot u_m\rangle+2\int_\O \Psi_m|\n \dot u_m|^2
\\=&\int_\O \left(\dot u_m\right)^2 (V_m-\Delta \Psi_m)+2\int_\O \Psi_m|\n \dot u_m|^2.
\end{align*}
Defining $V_m'=V_m-\Delta \Psi_m$, we finally get 
\begin{equation}\ddot J(m)[h,h]=\int_\O 2\Psi_m|\n \dot u_m|^2+\int_\O (\dot u_m)^2V_m'\end{equation} and the assumptions we made on $F$ allow us to conclude that $V_m'$ {belongs to} $L^\infty$ and that $\inf_\O \Psi_m>0$. 
To obtain the bang-bang property, {we} argue by contradiction and assume that 
the set $\tilde \O:=\{0<m^*<1\}$ is of positive measure. To reach a contradiction, it suffices to exhibit a perturbation $h$ that is supported in $\tilde \O$ such that 
\begin{equation}
\int_\O h=0\quad \text{and}\quad \int_\O |\n \dot u_m|^2>\frac{\Vert V_m'\Vert_{L^\infty(\O)}}{\inf_\O \Psi_m}\int_\O \left(\dot u_m\right)^2.
\end{equation} 
{Following the proof} of Theorem~\ref{Th:BangBang}, we introduce the sequence of eigenfunctions and eigenvalues $\{\p_k, \lambda_k\}_{k\in \N}$ associated to the operator 
\begin{equation}\mathcal L_m:=-\Delta-\left(m+\frac{\partial F}{\partial u}(x,u_m)\right)\end{equation}  with $\mathcal B\p_k=0$. {Adapting hence} the proof of Theorem~\ref{Th:BangBang}, we show that for any $K\in \N$, there exists an admissible perturbation $h$ such that
\begin{equation}
hu_m=\sum_{k\geq K} \alpha_k \p_k,\qquad  \sum_{k\geq K}\alpha_k^2=1.
\end{equation} 
It follows that for such a perturbation, 
\begin{equation}
\int_\O |\n \dot u_m|^2\geq \lambda_K\int_\O \left(\dot u_m\right)^2.
\end{equation} 
{Choosing $K\in \N$ large enough so that $\lambda_K\geq \frac{\Vert V_m'\Vert_{L^\infty(\O)}}{\inf_\O \Psi_m}$ yields the expected conclusion.}
\newpage
\appendix
\begin{center}
\LARGE{\bf Appendix}
\end{center}
\section{Proof of Lemma \ref{Le:VF}}\label{proofLem:VF}
Let us first recall that since $\tmm$ {is non-negative and does not vanish in $\O$}, we have 
\begin{equation}
\mathcal E_{m,\mu}(\tmm)=-\frac16\int_\O \tmm^3<0,
\end{equation}
so that $u(\cdot)= 0$ is not a minimiser of $\mathcal E_{m,\mu}$.

In order to prove this Lemma, let us introduce the energy functional
\begin{equation}
\mathscr F_{m,\mu}:W^{1,2}(\O)\ni u\mapsto\frac\mu2\int_\O |\n u|^2-\frac12\int_\O mu^2+\frac13\int_\O |u|^3.
\end{equation}
 {Observe that}
 \begin{equation}\label{Eq:Int}
 \forall u\in W^{1,2}(\O),\qquad  \mathscr F_{m,\mu}(u)=\mathscr F_{m,\mu}(|u|)=\mathcal E_{m,\mu}\left(|u|\right).
 \end{equation}
 In particular, if $\mathscr F_{m,\mu}$ has a minimiser $u^*$, then $|u^*|$ also minimises $\mathscr F_{m,\mu}$, and $|u^*|$ solves \begin{equation}\inf_{u\in \mathscr K}\mathcal E _{m,\mu}(u).\end{equation} Conversely, if $u_*\geq 0$ is a minimiser of $\mathcal E_{m,\mu}$ then for any $z\in W^{1,2}(\O)$, 
 \begin{equation}\mathscr F_{m,\mu}(z)=\mathscr F_{m,\mu}(|z|)=\mathcal E_{m,\mu}(|z|)\geq \mathcal E_{m,\mu}(u_*)=\mathscr F_{m,\mu}(u_*)\end{equation} and so $u_*$ is a minimiser of $\mathscr F_{m,\mu}$.
 
 Let us then prove that $\tmm$ is a minimiser of $\mathscr F_{m,\mu}$. 
Consider a minimising sequence $\{y_k\}_{k\in \N}$ of $\mathscr F_{m,\mu}$. Up to replacing $y_k$ with $|y_k|$ which, thanks to \eqref{Eq:Int}, would still yield a minimising sequence, {we can assume that for every $k\in \N$, $y_k$ is non-negative}.
%
 Let us introduce $\lambda(m)$ as the first eigenvalue of the operator $-\Delta -m$ with Neumann boundary conditions. {According to the Courant-Fischer principle, one has}
 \begin{equation}
 \lambda(m)=\inf_{\substack{u\in W^{1,2}(\O)\\ \int_\O u^2=1}}\left(\mu \int_\O |\n u|^2-\int_\O mu^2\right)\end{equation} 
 {and therefore} 
 \begin{equation}
 \mathscr F_{m,\mu}(y_k)\geq \lambda(m)\Vert y_k\Vert_{L^2(\O)}^2+\frac13\Vert y_k\Vert_{L^3(\O)}^3.
 \end{equation}
 
 {Since the embedding $L^3(\O)\hookrightarrow L^2(\O)$ is continuous, there exists $C>0$ such that}
 \begin{equation}
 \sup_{k\in \N}\left(C \lambda(m)\Vert y_k\Vert_{L^3(\O)}^2+\frac13\Vert y_k\Vert_{L^3(\O)}^3\right)<\infty.
 \end{equation} 
 As a consequence, $\{y_k\}_{k\in \N}$ is bounded in $L^3(\O)$ {and then} also in $L^2$ {by using the same argument}. Finally, {by} definition of $\mathscr F_{m,\mu}$, it is also uniformly bounded in $W^{1,2}(\O)$. 
 
Hence, there exists a strong $L^2(\O)$, {weak $L^3(\O)$ and} weak $W^{1,2}$ closure point $y_\infty \in W^{1,2}(\O)$  of $\{y_k\}_{k\in \N}$. 
Since the map $\R\ni x\mapsto |x|^3$ is convex, the map $L^3(\O)\ni y\mapsto \int_\O |y|^3$ 
 is lower semi-continuous. Hence it follows that
 \begin{equation}
 \liminf_{k\to \infty}\mathscr F_{m,\mu}(y_k)\geq \mathscr F_{m,\mu}(y_\infty),
 \end{equation} 
{and $y_\infty$ minimises $\mathscr F_{m,\mu}$ over $\mathscr K$}. {Since $0_{L^2(\O)}$} is not a minimiser, we have $y_\infty\geq 0$ and $y_\infty(\cdot)\neq 0$.
 
 The map $x\mapsto |x|^3$ is $\mathscr C^1$ and the Euler-Lagrange equation on $y_\infty$ writes
 \begin{equation}
 \left\{\begin{array}{ll}
 \Delta y_\infty+y_\infty(m-y_\infty)=0 & \text{in }\O, \\ 
 \frac{\partial y_\infty}{\partial \nu}=0 & \text{in }\partial\O,\\
 y_\infty\geq 0.
 \end{array}\right.
 \end{equation} 
 From uniqueness for non-zero, non-negative solutions of the logistic-diffusive PDE, {it follows} that $y_\infty=\tmm$. As a consequence:
 \begin{equation}\mathcal E_{m,\mu}(\tmm)=\mathscr F_{m,\mu}(\tmm)=\min_{ W^{1,2}(\O)}\mathscr F_{m,\mu}=\min_{\mathscr K}\mathcal E_{m,\mu},\end{equation}
 which concludes the proof.

\section{Proof of \eqref{Eq:kK} in the one-dimensional case}\label{sec:proof1DModica}
{We assume in this section that $\O=(0,1)$}. Let us prove \eqref{Eq:kK}.
The proof relies on ideas {by} Modica \cite{ModicaMortola}. Given Lemma \ref{Le:MBR} {and \eqref{m1636}}, it is enough to establish a uniform convergence rate of $\tmm$ to $m$ in $L^1(\O)$ {with respect to the $BV(\O)$ norm of $m$, as} $\mu \to 0$ . 

We proceed in several steps, first considering the case where $m$ is the characteristic function of a set of finite perimeter before encompassing the general case. Let us {introduce} 
\begin{equation}\mathcal M_M(\O):=\left\{m\in \mathcal M(\O), \Vert m\Vert_{BV(\O)}\leq M\right\}
\quad \text{and}\quad
{\mathbb M}_M(\O):= {\mathbb M}(\O)\cap \mathcal M_M(\O),
\end{equation}
{for every $M>0$.}
{The following Proposition is the key point of the proof.}
\begin{proposition}\label{Pr:1D}
There exists $C_1>0$ such that
\begin{equation}
\forall M>0, \forall m\in {\mathbb M}(\O), \qquad \tilde{\mathcal E}_{m,\mu}(\tmm)\leq C_1\sqrt{\mu}\Vert m\Vert_{BV(\O)}.
\end{equation}
\end{proposition}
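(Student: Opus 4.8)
The plan is to establish Proposition~\ref{Pr:1D} by a one-dimensional Modica–Mortola-type construction, using as a competitor in the energy a carefully designed transition profile. Since $\tmm$ minimises $\mathcal E_{m,\mu}$ (Lemma~\ref{Le:VF}), and $\tilde{\mathcal E}_{m,\mu}$ differs from $\mathcal E_{m,\mu}$ only by the constant $\frac16\int_\O m^3$, it suffices to build, for each $m=\mathds 1_E\in{\mathbb M}(\O)$, a nonnegative test function $v=v_{m,\mu}\in W^{1,2}(0,1)$ such that $\tilde{\mathcal E}_{m,\mu}(v)\leq C_1\sqrt\mu\,\Vert m\Vert_{BV(\O)}$, and then invoke $\tilde{\mathcal E}_{m,\mu}(\tmm)\leq\tilde{\mathcal E}_{m,\mu}(v)$. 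In one dimension, $E$ is (up to a negligible set) a finite union of intervals whose number of endpoints in $(0,1)$ is controlled by $|m|_{TV}=\mathrm{Per}(E)$; in particular $\Vert m\Vert_{BV(\O)}=m_0+\mathrm{Per}(E)$ and $\mathrm{Per}(E)\geq 1$ when $E\neq\emptyset,\O$, which will let us absorb the harmless constant contributions.

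The heart of the construction is the classical optimal-profile computation. Away from a $\sqrt\mu$-neighbourhood of $\partial E$ I would set $v\equiv m$ (so $v=1$ inside, $v=0$ outside, contributing nothing to the energy except the balanced bulk terms $\frac13\int v^3-\frac12\int m v^2+\frac16\int m^3$, which vanishes pointwise on $\{m=0\}$ and on $\{m=1\}$ equals $\frac13-\frac12+\frac16=0$). Near each transition point $x_0\in\partial E$ I would insert, on an interval of length $O(\sqrt\mu)$, a monotone profile interpolating between $0$ and $1$; after the rescaling $x=x_0+\sqrt\mu\,y$, the local contribution of the gradient term $\frac\mu2\int|v'|^2$ and of the double-well-type term $W(v):=\frac13 v^3-\frac12 v^2+\frac16$ (note $W\geq 0$ on $[0,1]$, with zeros exactly at $v=0$ and $v=1$) both become $O(\sqrt\mu)$, and minimising $\int(\tfrac12|\phi'|^2+W(\phi))$ over profiles $\phi$ with $\phi(\pm\infty)\in\{0,1\}$ gives a finite constant $c_W=\int_0^1\sqrt{2W(s)}\,ds$. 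Summing over the at most $\mathrm{Per}(E)+1\lesssim\Vert m\Vert_{BV(\O)}$ transition points yields $\tilde{\mathcal E}_{m,\mu}(v)\leq C_1\sqrt\mu\,\Vert m\Vert_{BV(\O)}$, with $C_1$ depending only on $c_W$ and $|\O|$. Some care is needed at transition points lying within $\sqrt\mu$ of each other or of the boundary of $(0,1)$: there one simply uses a cruder linear interpolation on the available interval, whose energy is still $O(\sqrt\mu)$ per transition, or one groups nearby points; the Neumann boundary condition is automatically respected since $v$ is locally constant near $0$ and $1$ for $\mu$ small (or one truncates the profile).

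The main obstacle, and the reason the paper flags that the argument "cannot be straightforwardly extended to higher dimensions", is precisely the geometry of $\partial E$: in 1D the level set is a finite point set with cardinality $\leq\mathrm{Per}(E)$, so the test function is glued from finitely many one-dimensional profiles and the total energy is linear in $\mathrm{Per}(E)$; in higher dimension one would need a uniform control of a tubular neighbourhood of a possibly very irregular reduced boundary, which is not available from the $BV$ bound alone. A secondary technical point is justifying that a finite-perimeter subset of $(0,1)$ really is equivalent to a finite union of intervals with endpoint count bounded by the perimeter — this is standard (structure of $BV(\mathbb R)$ functions) but should be cited. Once Proposition~\ref{Pr:1D} is proved for $m\in{\mathbb M}(\O)$, combining it with Lemma~\ref{Le:L1} gives $\Vert\tmm-m\Vert_{L^1}\leq M_1(C_1\sqrt\mu\,\Vert m\Vert_{BV})^{1/3}$, which is exactly estimate~\eqref{Eq:EstimBV} with $\alpha=\tfrac16$, $\beta=3$ after the substitution, and then \eqref{m1636} together with Lemma~\ref{Le:MBR} yields the stated blow-up $\Vert m_\mu^*\Vert_{BV(\O)}\gtrsim\mu^{-1/2}$ in dimension one.
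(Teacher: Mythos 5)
Your proposal is correct and follows essentially the same route as the paper: both use the variational characterisation of $\tmm$ together with a Modica--Mortola-type competitor equal to $m$ away from the jump set and interpolating over a layer of width $\sim\sqrt\mu$ near each of the $O(\operatorname{Per}(E))$ jump points, each layer contributing $O(\sqrt\mu)$ to the energy (the paper's explicit choice is $\phi_\e\circ h_A$ with a linear ramp of width $\eta_\e$, optimised to $\eta_\e=\sqrt\mu$, with the ramp placed entirely outside $A$ so the potential vanishes identically on $\{m=1\}$). One cosmetic slip: $W(0)=\tfrac16\neq 0$, so $W$ is not a genuine double well --- the relevant fact, which you do compute correctly, is that the full $m$-dependent integrand $\tfrac13 v^3-\tfrac12 mv^2+\tfrac16 m^3$ vanishes wherever $v=m$, and since only an upper bound is needed this does not affect the argument.
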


We can now prove Theorem 1. First of all, the maximiser $m_\mu^*$ of \eqref{Eq:Pv} is a bang-bang function by Theorem \ref{Th:BangBang} {and belongs therefore to $\mathbb{M}(\O)$}. We thus obtain 
\begin{equation}
\delta^3\leq C_1^3 \sqrt{\mu}\Vert m_\mu^*\Vert_{BV(\O)},
\end{equation}
{where $\delta>0$ is given by Lemma \ref{Le:MBR}}. The conclusion follows.

\begin{proof}[Proof of Proposition \ref{Pr:1D}]
In what follows, we will bypass the distinction between the interior perimeter of a subset $A\subset (0;1)$, denoted $\operatorname{Per}_{int}(A)$, and its perimeter denoted $\operatorname{Per}(A)$ when seen as a subset of $\R$. Since we have obviously
\begin{equation}
 \operatorname{Per}_{int}(A)\leq \operatorname{Per}(A)\leq \operatorname{Per}_{int}(A)+2,
\end{equation}
it follows that there exists $c_0>0$ such that, for any set $A$ of finite perimeter
\begin{equation}
\operatorname{Per}_{int}(A)\geq 2\quad \Rightarrow\quad  c_0 \operatorname{Per}(A)\leq  \operatorname{Per}_{int}(A)\leq \operatorname{Per}(A).
\end{equation} 
Furthermore, since we know from \cite{MRBSIAP} that the $BV(\O)$ norm of maximisers blows-up as $\mu \to0$, we can always assume that the set of finite perimeter $A$ we are working with satisfies $\operatorname{Per}_{int}(A)\geq 2.$ 

Since $m\in {\mathbb M}_M(\O)$, we know that $m$ writes $m=\mathds 1_A$ {where $A$ is a set of bounded perimeter}.

Let us then consider such a subset $A$.  Since $A$ is of finite perimeter, it writes
\begin{equation}
A=\bigsqcup_{i=1}^n (a_i;b_i)
\end{equation} 
with $0\leq a_i<b_i<a_{i+1}\leq1$ for every $i\in \llbracket1,\dots,n\rrbracket$.

To obtain the conclusion of the Proposition, it suffices to exhibit a constant $C_1$ that does not depend on $\mu,m$, and a function $u_\mu \in W^{1,2}(\O)$ such that
\begin{equation}
\tilde{\mathcal E}_{m,\mu}(u_\mu)\leq  C_1\sqrt{\mu}\operatorname{Per(A)}.
\end{equation}

{Let us introduce $h_A$, the so-called signed-distance} function {to the set $A$, defined by} 
\begin{equation}
h_A:x\mapsto \begin{cases} \operatorname{dist}(x,\partial A)\text{ if }x \notin A, \\0\text{ if } x \in \partial A, \\ -\operatorname{dist}(x,\partial A)\text{ if }x \in A,
\end{cases}
\end{equation} 
as well as the auxiliary function
\begin{equation}
\phi_\e:\R\ni t\mapsto \left\{\begin{array}{ll} 
1 &  \text{ if }t<0, \\ 
0 & \text{ it }t\geq \eta_\e, \\ 
1-\frac{t}{\eta_\e} & \text{ otherwise},
\end{array}\right.\end{equation} 
for some regularization parameter $\e\geq 0$, where $\eta_\e=\e^{\frac14}$.
We combine these two functions and introduce $u_\e= \phi_\e\circ h_A$. Let us use $u_\e$ as a test function in the variational formulation \eqref{Eq:VF}. We will estimate separately the gradient term and the {remainder term of the energy functional}. 

\paragraph{Estimate of the gradient term.} Since $h_A$ is differentiable a.e. and $|h'_A|=1$, we have $(u_\e')^2=\phi_\e'(h_A(\cdot))^2$ a.e. in $\O$. 
Using the decomposition of $A$, we get
\begin{equation}
\int_0^1(u_\e')^2=\int_0^{a_1}\phi_\e'(h_A(t))^2dt+\sum_{i=1}^n\left\{ \int_{a_i}^{b_i}\phi_\e'(h_A(t))^2dt+\int_{b_i}^{a_{i+1}}\phi_\e'(h_A(t))^2dt\right\}+\int_{a_{n+1}}^1\phi_\e'(h_A(t))^2dt.
\end{equation} 
Let us focus on the term 
$$
\sum_{i=1}^n\left\{ \int_{a_i}^{b_i}\phi_\e'(h_A(t))^2\, dt+\int_{b_i}^{a_{i+1}}\phi_\e'(h_A(t))^2\, dt\right\}.
$$ 
The main interest of this decomposition is that on each interval $(a_i;b_i)$ or $(b_i;a_{i+1})$, the function $h$ is symmetric with respect to the midpoint of the interval.  As a consequence, two cases may occur {when} considering the interval $(a_i;b_i)$ (the case $(b_i;a_{i+1})$ being exactly identical):
\begin{itemize}
\item[(i)] either $|b_i-a_i|\leq 2\eta_\e$, in which case, since $\Vert \phi_\e'\Vert_{L^\infty}=\frac1{\eta_\e}$ it follows that 
\begin{equation}
\int_{a_i}^{b_i}\phi_\e'(h_A(t))^2dt\leq 2\eta_\e \Vert \phi_\e'\Vert_{L^\infty}^2\leq \frac2{\eta_\e}.
\end{equation}
\item[(ii)] or $|b_i-a_i|>2\eta_\e$, in which case $|\{u_\e'\neq 0\}\cap (a_i;b_i)|\leq 2\eta_\e$ and so 
\begin{equation}
\int_{a_i}^{b_i}\phi_\e'(h_A(t))^2dt\leq 2\eta_\e \Vert \phi_\e'\Vert_{L^\infty}^2\leq \frac2{\eta_\e}.
\end{equation}
\end{itemize}
As such, we have 
\begin{equation}\sum_{i=1}^n\left\{ \int_{a_i}^{b_i}\phi_\e'(h_A(t))^2dt+\int_{b_i}^{a_{i+1}}\phi_\e'(h_A(t))^2dt\right\}\leq \frac{4n}{\eta_\e}\leq 2\frac{\operatorname{Per}(A)}{\eta_\e}.
\end{equation}

The end terms 
$$\int_0^{a_1}\phi_\e'(h_A(t))^2dt+\int_{b_n}^1\phi_\e'(h_A(t))^2dt$$ are handled in the same way, and we finally obtain 
\begin{equation}\label{m1931}
\int_0^1 (u_\e')^2(t)dt\leq C \frac{\operatorname{Per}(A)}{\eta_\e}
\end{equation}
for some constant $C>0$.

\paragraph{Estimate of the potential term.} 
It remains to {deal with} the quantity
\begin{equation}\frac13\int_0^1 u_\e^3(t)dt-\frac12\int_0^1 mu_\e^2(t)dt+\frac16\int_0^1 m^3.
\end{equation}
If we define $\psi_\e=\frac13u_\e^3-\frac12 mu_\e^2+\frac16 m^3$ we have the following decomposition: in the set $\{m=1\}$, we have $h_A\leq 0$, hence $u_\e=1$ and {we infer that }$\psi_\e= 0$ in $\{m=1\}$.

The integral to estimate boils down to 
\begin{equation}
\int_0^1 \psi_\e(t)\mathds 1_{\{m=0\}}\, dt=\int_0^1\frac13 u_\e^3\mathds 1_{\{m=0\}}.
\end{equation}
However, we can do exactly the same distinction as for the analysis of the gradient part of the energy: for any $i\in \llbracket 1,n\rrbracket$ (the end intervals are handled in the same way) we either have $|a_{i+1}-b_i|\leq 2\eta_\e$, in which case 
\begin{equation} \int_{b_i}^{a_{i+1}}\psi_\e(t)dt\leq 2\eta_\e\end{equation} or $|a_{i+1}-b_i|>2\eta_\e$, in which case the same conclusion holds {since $0\leq \psi_\e\leq 1$ a.e. in $\O$}. As a consequence, we obtain 
\begin{equation}\label{m1930}
\int_0^1 \psi_\e(t)\leq 2\eta_\e \operatorname{Per}(A).
\end{equation}

{Combining \eqref{m1931} and \eqref{m1930}} yields the existence of $C_1>0$ such that 
\begin{equation}
\tilde{\mathcal E}_{m,\mu}(u_\e)\leq C_1\left(\frac\mu{\eta_\e}+\eta_\e\right)\operatorname{Per}(A).
\end{equation}
{Picking} $\eta_\e=\sqrt{\mu}$, we obtain 
\begin{equation}
\tilde{\mathcal E}_{m,\mu}(u_\e)\leq2 C_1\sqrt{\mu}\operatorname{Per}(A),
\end{equation}
{leading to the desired conclusion}.
\end{proof}

\bibliographystyle{abbrv}

\bibliography{BiblioFrag}

\end{document}